\newcommand{\NN}{\mathbb{N}}
\newcommand{\RR}{\mathbb{R}}
\newcommand{\ints}{\int\limits}
\newcommand{\lam}{\lambda}
\newcommand{\ep}{\varepsilon}
\newcommand{\de}{\delta}
\newcommand{\al}{\alpha}
\newcommand{\si}{\sigma}
\newcommand{\ga}{\gamma}
\newcommand{\RRlxi}{\mathcal{R}_\lam}
\newcommand{\II}{\mathcal{I}}
\newcommand{\Om}{\Omega}
\newcommand{\pa}{\partial}
\newcommand{\RN}{\RR^N}
\newcommand{\ovr}{\overline}
\newcommand{\Ga}{\Gamma}
\newcommand{\De}{\Delta}
\newtheorem{theorem}{Theorem}[section]
\newtheorem{lemma}[theorem]{Lemma}
\newtheorem{proposition}[theorem]{Proposition}
\theoremstyle{remark}
\newtheorem{rmk}[theorem]{Remark}
\DeclareMathOperator{\diver}{\mathrm{div}}
\DeclareMathOperator{\dist}{\mathrm{dist}}
\renewenvironment{proof}{
  \noindent{\it Proof.}\ }{\hspace*{\fill}
  \begin{math}\Box\end{math}\medskip}
\title{Symmetry of minimizers with a level surface
parallel to the boundary\thanks{This research was partially supported by Grants-in-Aid
for Scientific Research (B) ($\sharp$ 20340031) of
Japan Society for the Promotion of Science and by a
Grant of the Ital\-ian MURST. The first two authors have been also supported by the Gruppo Nazionale per l'Analisi Matematica, la Probabilit\`a e le loro Applicazioni (GNAMPA) of the Italian Istituto Nazionale di Alta Matematica (INdAM).
}}
\author{Giulio Ciraolo\thanks{Dipartimento di Matematica e
Informatica, Universit\`a di Palermo, Via Archirafi 34, 90123, Italy, ({\tt giulio.ciraolo@unipa.it}).}
\\ Rolando Magnanini\thanks{Dipartimento di matematica ``U.
Dini'', Universit\`a di Firenze, Viale Morgagni 67/A, 50134 Firenze,
Italy, ({\tt magnanin@math.unifi.it}).}
\\ Shigeru Sakaguchi\thanks{Department of Applied Mathematics,
Graduate School of  Engineering, Hiroshima
University, Higashi-Hiroshima, 739-8527,  Japan.
({\tt sakaguch@amath.hiroshima-u.ac.jp}).}}
\begin{document}

\date{}

\maketitle


\begin{abstract}
We consider the functional
\begin{equation*}
\II_\Om(v) = \int_\Omega [f(|Dv|) - v]\,dx,
\end{equation*}
where $\Omega$ is a bounded domain and $f$ is a convex function. Under general assumptions
on $f$, Crasta \cite{Cr1} has shown that if $\II_\Om$ admits a minimizer in $W_0^{1,1}(\Omega)$ depending only on the distance from the boundary of $\Omega$, then $\Omega$ must be a ball. With some restrictions on $f$, we prove that spherical symmetry can be obtained only by assuming that the minimizer has {\it one} level surface parallel to the boundary
(i.e. it has only a level surface in common with the distance).

We then discuss how these results extend to more general settings, in particular to functionals that are not differentiable and to solutions of fully nonlinear elliptic and parabolic equations.
\end{abstract}

\section{Introduction} \label{section introd}

We consider a bounded domain $\Om$ in $\RR^N$ $(N\geq 2)$ and,
for $x\in\ovr{\Om},$ denote by $d(x)$ the distance of $x$ from $\RN\setminus\Om,$ that is
$$
d(x)=\min_{y\in\RR^N\setminus\Om}|x-y|, \ x\in\ovr{\Om};
$$
$d$ is Lipschitz continuous on $\ovr{\Om}.$  For a positive number $\de,$ we define
the {\it parallel surface} to the boundary $\pa\Om$ of $\Om$ as
$$
\Ga_\de=\{ x\in\Om: d(x)=\de\}.
$$
\par
In this paper, we shall be concerned with minimizers of variational problems and solutions of quite general
nonlinear elliptic and parabolic partial differential equations,
which admit a single level surface that is parallel to $\pa\Om.$
\par
A motivation to our concern is the work of G. Crasta \cite{Cr1} on
the minimizers of certain problems of the Calculus of Variations in the class of
the so-called {\it web-functions}, that is those functions that
depend only on the distance from the boundary (see \cite{CG} and \cite{Ga}, where the term
web-function was introduced for the first time).
In \cite{Cr1}, it is proved that, if $\Om$ is a smooth domain and the functional
\begin{equation}
\label{defJ}
\II_\Om(v)=\int_\Om [ f(|D v|) -v ]\, dx
\end{equation}
has a minimizer in the class of $W^{1,1}_0(\Om)$-regular web functions, then
$\Om$ must be a ball. The assumptions on the lagrangean $f$ are very general:
$f$ is merely required to be convex and the function $p\mapsto f(|p|)$ to be differentiable.
\par
Related to Crasta's result, here, we consider the variational problem
\begin{equation}\label{I(u)}
\inf \{\II_\Om(v):\ v \in W_0^{1,\infty}(\Omega)\},
\end{equation}
under the following assumptions for $f:[0,\infty)\to\RR:$
\begin{itemize}
\item[(f1)] $f\in C^1([0,+\infty))$ is a convex, monotone nondecreasing function such
that $f(0)=0$ and
$$\lim\limits_{s\to +\infty} \dfrac{f(s)}{s} = +\infty;$$

\item[(f2)] there exists $\si \geq 0$ such that $f'(s)=0$ for every $0 \leq s \leq \si$, $f'(s)>0$ for $s>\si$ and $f\in C^{2,\al}(\si,+\infty)$ $(0<\al<1),$ with $f''(s)>0$ for $s>\si.$
\end{itemize}
Also, we suppose that there exists a domain $G$ such that
\begin{equation}
\label{defD}
\ovr{G}\subset\Om,\, \pa G\in C^1 \mbox{ satisfying the interior sphere condition, and } \pa G=\Ga_\de,
\end{equation}
for some $\de>0.$
\par
The main result in this paper is the following.

\begin{theorem} \label{thm symmetry diff functionals}
Let $\Om\subset\RN$ be a bounded domain and let
$f$ and $G$ satisfy assumptions (f1)-(f2) and \eqref{defD}, respectively.
\par
Let $u$ be the solution of \eqref{I(u)} and suppose $u$ is $C^1$-smooth in a tubular neighborhood of $\Ga_\de.$
\par
If
\begin{equation}
\label{OVDTcond}
u=c \ \mbox{ on } \ \Ga_\de
\end{equation}
for some constant $c>0,$
then $\Om$ must be a ball.
\end{theorem}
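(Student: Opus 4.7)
The strategy is to establish two overdetermined conditions on $\Ga_\de$, namely constancy of $u$ (given) and of the normal derivative $|Du|$ (to be shown), then use the PDE to force constancy of the mean curvature of $\Ga_\de$, and finally invoke Alexandrov's theorem to conclude $\Ga_\de$ is a sphere. Since $\partial\Om$ is the parallel surface to $\Ga_\de$ at distance $\de$, it will then be a concentric sphere, and $\Om$ a ball.

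The Euler-Lagrange equation for \eqref{I(u)} reads
\[
-\diver\!\left( f'(|Du|)\,\frac{Du}{|Du|}\right) = 1 \quad\text{in }\Om.
\]
In the tubular neighborhood $\Cam$ of $\Ga_\de$ where $u\in C^1$, positivity of the right-hand side forces $|Du|>\si$ throughout $\Cam$: otherwise, at any point where $|Du|\le\si$ one would have $f'(|Du|)=0$ and the left-hand side would vanish pointwise, contradicting the equation. By (f2) the operator is then uniformly elliptic with $C^{1,\al}$ coefficients in $\Cam$, so Schauder estimates lift $u$ to $C^{2,\al}(\Cam)$ and the equation holds classically there.

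Since $u=c$ on $\Ga_\de$, one has $Du=u_\nu\,\nu$ on $\Ga_\de$, with $\nu$ the unit normal. Evaluating the classical PDE at points of $\Ga_\de$ and using the decomposition $\De u\big|_{\Ga_\de}=u_{\nu\nu}+(N-1)H\,u_\nu$, where $H$ is the mean curvature of $\Ga_\de$, yields
\[
f''(u_\nu)\,u_{\nu\nu}+(N-1)\,f'(u_\nu)\,H=-1 \quad\text{on }\Ga_\de.
\]
If both $u_\nu$ and $u_{\nu\nu}$ are constant along $\Ga_\de$, this identity forces $H$ to be constant on the closed embedded $C^{1,1}$ hypersurface $\Ga_\de$, and Alexandrov's theorem then guarantees that $\Ga_\de$ is a round sphere; its parallel $\partial\Om=\Ga_0$ is concentric, and $\Om$ is a ball.

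The main obstacle is establishing the constancy of $u_\nu$ (and consequently of $u_{\nu\nu}$) along $\Ga_\de$. The natural route is to compare $u|_A$ on the annular region $A=\Om\setminus\ovr{G}$ with a radial competitor $w(x)=U(d(x))$: the minimality of $u$, combined with the strict convexity of $\II_\Om$ on $\{|Du|>\si\}$ from (f2), should identify $u$ with $w$ in $A$, either by a direct variational comparison or by unique continuation for the nondegenerate elliptic equation with Cauchy data on $\Ga_\de$. The interior sphere condition in \eqref{defD} is used to ensure that the normal foliation from $\Ga_\de$ is well-defined on a tubular neighborhood reaching $\partial\Om$. The construction of $w$ and the justification of the identification $u\equiv w$ through the variational/comparison argument is the delicate core of the proof; once secured, $u_\nu = U'(\de)$ and $u_{\nu\nu}=U''(\de)$ are automatically constant on $\Ga_\de$ and the geometric conclusion above applies.
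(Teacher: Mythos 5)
Your overall strategy (constancy of $u$ \emph{and} of $u_\nu$ on $\Ga_\de$ $\Rightarrow$ constant mean curvature $\Rightarrow$ Alexandrov) is genuinely different from the paper's, which never proves that $|Du|$ is constant on $\Ga_\de$: the paper instead runs the method of moving planes on $G$ and $\Om$ simultaneously (using that $\Om=G+B_\de(0)$, so reflections preserving $G$ preserve $\Om$), and derives a contradiction from the weak, strong and Hopf comparison principles applied to $u$ and its reflection $u^\lam$ near $\Ga_\de$. Unfortunately, the step you yourself identify as ``the delicate core'' --- identifying $u$ in the annulus $A=\Om\setminus\ovr{G}$ with a web function $w=U(d(x))$ --- is not an omitted technicality but an essentially circular gap. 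For $U(d(x))$ to satisfy the Euler--Lagrange equation one needs $\De d$ (equivalently, the mean curvature of every level set of $d$) to be a function of $d$ alone, which already forces $\Om$ to be a ball; this is exactly the content of Crasta's theorem that the paper is trying to weaken. The alternative you mention, unique continuation from Cauchy data on $\Ga_\de$, also cannot start: the Cauchy data consist of $u=c$ and $u_\nu$ on $\Ga_\de$, and the constancy of $u_\nu$ is precisely what you are trying to prove. No variational comparison with a single radial competitor can produce it, because the only information available on $\pa\Om$ is $u=0$, not a pointwise match with $w$.

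A secondary but genuine error: your argument that $|Du|>\si$ throughout the tubular neighborhood $\Cam$ is invalid. The equation holds weakly, and the vanishing of the vector field $f'(|Du|)Du/|Du|$ \emph{at a point} (or on a null set) does not make its distributional divergence vanish there; one could only reach a contradiction if $\{|Du|\le\si\}$ contained an open set. Even that weaker statement does not give uniform ellipticity on all of $\Cam$. The paper obtains the needed nondegeneracy differently: it compares $u$ with the explicit radial minimizer on an interior tangent ball at each point of $\Ga_\de$ (Lemma \ref{lemma F2I gradient boundary}), getting $\liminf_{t\to0^+}[u(x_0+t\nu)-u(x_0)]/t\ge g'(\rho/N)>\si$, and then uses the assumed $C^1$ regularity to propagate $|Du|>\si$ to a smaller neighborhood $A_\de$ of $\Ga_\de$, where elliptic regularity upgrades $u$ to $C^{2,\ga}$. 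You would need this lemma (or an equivalent) in any case, but it does not rescue the main gap described above.
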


Thus, at the cost of requiring more restrictive growth and regularity assumptions
on $f,$ we can sensibly improve Crasta's theorem:
indeed, if $u$ is a web function, then {\it all} its level surfaces are parallel to $\pa\Om.$
We also point out that we make no (explicit) assumption on the regularity of $\pa\Om:$
we only require that the parallel surface $\Ga_\de$ has some special topology and
is mildly smooth.
\par
In Theorem \ref{thm symmetry not-diff functionals}, we will also extend this result to a case
in which the function $p\mapsto f(|p|)$ is no longer differentiable at $p=0.$
Our interest on this kind of functionals (not considered in \cite{Cr1}) is
motivated by their relevance in the study of complex-valued solutions of the {\it eikonal} equation (see \cite{MT1}-\cite{MT4} and \cite{CeM}).
\par

The reason why we need stricter assumptions on $f$ rests upon a different method of proof.
While in \cite{Cr1} one obtains symmetry by directly working on the Euler's equation for $\II_\Om$
(that only involves $f'$), in the proof of Theorem \ref{thm symmetry diff functionals}, we rely on
the fact that minimizers of $\II_\Om$ satisfy in a generalized sense a nonlinear equation of type
\begin{equation}
\label{elliptic}
F(u, D u, D^2 u)=0 \ \mbox{ in } \ \Om,
\end{equation}
(that involves $f''$); moreover, to obtain symmetry, we use the {\it method of moving planes}
that requires extra regularity for $f''.$
\par
Theorem \ref{thm symmetry diff functionals} (and also Theorems \ref{thm symmetry not-diff functionals}, \ref{th:symmEll} and \ref{th:symmPar}) works out an idea used by
the last two authors in the study of the so-called {\it stationary
surfaces} of solutions of (non-degenerate) fast-diffusion parabolic
equations (see \cite{MS}).  A stationary surface is a surface $\Ga\subset\Om$ of codimension $1$
such that, for some function $a:(0,T)\to\RR,$ $u(x,t)=a(t)$ for every $(x,t)\in \Ga\times(0,T).$
In fact, in \cite{MS}, it is proved that if the
initial-boundary value problem
\begin{eqnarray*}
\label{fastdiffusion}
&u_t-\De\phi(u)=0 \ \mbox{ in } \ \Om\times(0,T),\\
&u=0\ \mbox{ on } \ \Om\times\{0\},\ \ u=1 \ \mbox{ on } \ \pa\Om\times(0,T)
\end{eqnarray*}
(here $\phi$ is a nonlinearity with derivative $\phi'$ bounded
from below and above by positive constants), admits a solution
that has a stationary surface, then $\Om$ must be a ball.
\par
The crucial arguments used in \cite{MS} are two: the former is the
discovery that a stationary surface must be parallel to
the boundary; the latter is the application of the method of moving planes.
This method was created by A.V. Aleksandrov to prove the spherical
symmetry of embedded surfaces with constant mean curvature or, more generally,
whose principal curvatures satisfy certain
constraints and, ever since, it has been successfully employed to prove spherical symmetry in
many a situation: the theorems of Serrin's for overdetermined boundary value problems (\cite{Se2}) and
those of Gidas, Ni and Nirenberg's for ground states (\cite{GNN}) are the most celebrated.
Here, we will use that method to prove Theorem \ref{thm symmetry diff functionals} (and also Theorems \ref{thm symmetry not-diff functionals}, \ref{th:symmEll} and \ref{th:symmPar}). Arguments similar to those used in \cite{MS} were
recently used in \cite{Sh}.

Let us now comment on the connections between the problem considered in Theorem \ref{thm symmetry diff functionals},
the one studied in \cite{Cr1} (both with $f(p)=\frac12\,|p|^2$)
and (the simplest instance of) Serrin's overdetermined
problem:
\begin{eqnarray}
&-\De u=1 \ \mbox{ in } \ \Om, \label{serrin1}\\
&\displaystyle u=0 \mbox{ on } \ \pa\Om,\label{Dirichlet}  \ \
\dfrac{\pa u}{\pa \nu}=\mbox{\rm constant} \mbox{ on } \ \pa\Om. \label{serrin2}
\end{eqnarray}
\par
It is clear that being a web function is a stronger condition,
since it implies both \eqref{OVDTcond} and \eqref{serrin2}.
Moreover, even if the constraint \eqref{OVDTcond} and
\eqref{serrin2} are not implied by one another,
we observe the following: (i) if \eqref{OVDTcond}
is verified for two positive sequences
$\{\de_n\}_{n\in\NN}$ and $\{c_n\}_{n\in\NN}$ with $\de_n\to 0$ as $n\to\infty,$ then
\eqref{serrin2} holds true;
(ii) from \eqref{Dirichlet} instead we can conclude that the
oscillation $\max\limits_{\Ga_\de} u-\min\limits_{\Ga_\de} u$ is $O(\de^2)$ as $\de\to 0.$
All in all, it seems that the constraint \eqref{OVDTcond} is weaker than \eqref{serrin2}.
\par
Another important remark is in order: the method of moving planes is
applied to prove our symmetry results in a much simplified
manner than that used for \eqref{serrin1}-\eqref{serrin2}; indeed,
since the overdetermination takes place in $\Om$ (and not on $\pa\Om$)
we need not use Serrin's corner lemma (in other words,
property (B) in \cite{Se2} for \eqref{elliptic} is not required). A further
benefit of this fact is that no regularity requirement is made on $\pa\Om,$
thanks to assumption \eqref{defD}.
\par

In Section \ref{section minima diff}
we will present our results on the problem proposed by Crasta
(for the proof of Theorem \ref{thm symmetry diff functionals}, see Subsection \ref{subsection proof of thm}.); in Section \ref{section minima not diff}
we will extend them to some cases which involve non-differentiable lagrangeans.
In Section \ref{section4} we will discuss how these results extend to fairly general settings,
in particular to solutions of fully nonlinear elliptic and parabolic equations.

We mention that a stability version of Theorem \ref{thm symmetry diff functionals} (for the semilinear
equation $\De u=f(u)$) is obtained in the companion paper \cite{CMS}.

\section{Minima of convex differentiable functionals}\label{section minima diff}

We first introduce some notation and prove some preliminary result.

\subsection{Uniqueness and comparison results}\label{subsection comp results}
Let $f$ satisfy (f1)-(f2). The functional $\II_\Om$ is
differentiable and a critical point $u$ of $\II_\Om$ satisfies the
problem
\begin{equation} \label{Euler eq F2I 1}
\begin{cases}
- \diver \left( \dfrac{f'(|Du|)}{|Du|} Du \right) = 1, & \textmd{in } \Omega,\\
u=0, &  \textmd{on } \pa \Omega,
\end{cases}
\end{equation}
in the weak sense, i.e.
\begin{equation}\label{Euler eq F2I 2}
\ints_\Omega \dfrac{f'(|Du|)}{|Du|} Du \cdot D\phi dx = \ints_\Omega \phi dx, \quad \textmd{for every } \phi\in C_0^1(\Omega).
\end{equation}

It will be useful in the sequel to have at hand the solution of \eqref{Euler eq F2I 1} when
$\Om$ is the ball of given radius $R$ (centered at the origin): it is given by
\begin{equation} \label{eq soluzione palla F2I}
u_R(x) = \ints_{|x|}^R g'\Big( \frac{s}{N} \Big) ds ,
\end{equation}
where
\begin{equation*}
g(t)=\sup\{st-f(s):\ s\geq 0\}
\end{equation*}
is the Fenchel conjugate of $f$. For future use, we notice that $|Du_R(x)| > \si$ for $x\not=0$.

It is clear that, when $\si=0,$ \eqref{I(u)} has a unique solution, since $f$ is strictly convex.
When $\si>0,$ proving the uniqueness for \eqref{I(u)} needs some more work.
In Theorem \ref{thm uniq Dir} we shall prove such result as a consequence of Lemmas \ref{lemma sign u} and \ref{lemma uniqueness I} below.

\begin{lemma} \label{lemma sign u}
Let $\Omega$ be a bounded domain and let $u$ be a solution of \eqref{I(u)}, where $f$ satisfies (f1) and (f2), with $\si > 0$.
\par
Then $u\geq 0$ and $\II_\Om(u)<0$.
\end{lemma}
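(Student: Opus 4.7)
The plan has two independent parts, one for each claim.

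For the nonnegativity, I would use the standard trick of comparing $u$ with its absolute value. Since $|u|\in W_0^{1,\infty}(\Omega)$ and $|D|u||=|Du|$ almost everywhere, we get
\[
\II_\Om(|u|) = \ints_\Om \bigl[f(|Du|) - |u|\bigr]\,dx \leq \ints_\Om \bigl[f(|Du|) - u\bigr]\,dx = \II_\Om(u),
\]
with equality forced by the minimality of $u$. This forces $\int_\Om(|u|-u)\,dx=0$, and since the integrand is nonnegative it must vanish pointwise a.e., giving $u\geq 0$.

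For the strict inequality $\II_\Om(u)<0$, the key is that assumption (f2) with $\sigma>0$ makes $f$ vanish identically on $[0,\sigma]$ (because $f(0)=0$ and $f'\equiv 0$ on $[0,\sigma]$). I would construct an explicit competitor with negative energy by scaling. Pick any nontrivial nonnegative $\psi\in W_0^{1,\infty}(\Om)$, set $L=\|D\psi\|_{L^\infty}$ (assume $L>0$, otherwise $\psi\equiv 0$), and choose $\ep\in(0,\sigma/L]$. Then $|D(\ep\psi)|\leq\sigma$ a.e., so $f(|D(\ep\psi)|)=0$ a.e., and
\[
\II_\Om(\ep\psi) = -\ep\ints_\Om \psi\,dx < 0.
\]
By minimality $\II_\Om(u)\leq \II_\Om(\ep\psi)<0$, which finishes the proof.

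Neither step seems to present a real obstacle: the first is a one-line convexity/minimality argument that doesn't even use (f2), and the second merely exploits the degeneracy interval $[0,\sigma]$ of $f$ by rescaling an arbitrary Lipschitz bump so that its gradient stays inside this interval. The only mildly delicate point is ensuring the competitor lies in the admissible class $W_0^{1,\infty}(\Om)$, which is automatic for a smooth compactly supported bump, and confirming that $f\equiv 0$ on $[0,\sigma]$, which is an immediate consequence of $f(0)=0$, $f\in C^1$, and $f'\equiv 0$ on $[0,\sigma]$.
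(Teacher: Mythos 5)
Your proof is correct and follows essentially the same route as the paper: comparison with $|u|$ for the sign (the paper phrases it as a strict-inequality contradiction on an open set where $u<0$, you phrase it via the equality case of minimality, but the idea is identical), and a rescaled nonnegative competitor with Lipschitz constant at most $\sigma$ for the strict negativity of the energy. No gaps.
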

\begin{proof}
Since $u \in W^{1,\infty}_0(\Omega)$, also $|u| \in W^{1,\infty}_0(\Omega).$ If $u<0$ on some open subset of $\Om$ ($u$ is continuous), then $\II_\Om(|u|) < \II_\Om(u)$ --- a contradiction.
\par
Now observe that $\II_\Om(v) < 0$ if $v \in W^{1,\infty}_0(\Omega)$ is any nonnegative function, $v \not\equiv 0$, with Lipschitz constant less or equal than $\si$. Thus, $\II_\Om(u)<0.$
\end{proof}

In the following, for a given domain $A$, we shall denote by $\II_A$ the integral functional
\begin{equation*}
\II_A(v) = \ints_A [f(|Dv|)-v]dx;
\end{equation*}
a {\it local minimizer} of $\II_A$ means a function that minimizes $\II_A$
among all the functions with the same boundary values.

\begin{lemma} \label{lemma uniqueness I}
Let $f$ satisfy (f1) and (f2) with $\si>0.$ Let $A$ be a bounded domain and
assume that $u_0,u_1 \in W^{1,\infty}(A)$ are local minimizers of $\II_A$, with
$u_0=u_1$ on $\pa A.$
Next, define
\begin{equation} \label{Ej}
E_j = \{x\in A:\ |Du_j|>\si \} ,\quad j=0,1,
\end{equation}
and assume that
\begin{equation*}
|E_0 \cup E_1| > 0.
\end{equation*}
\par
Then $u_0 \equiv u_1$.
\end{lemma}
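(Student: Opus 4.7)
My plan combines a convex-combination argument with a flux argument exploiting the degeneracy of $a(p):=f'(|p|)p/|p|$ on $\{|p|\le\si\}$. For $t\in[0,1]$ set $u_t:=(1-t)u_0+tu_1$; since $f$ is convex and nondecreasing, $p\mapsto f(|p|)$ is convex and so is $\II_A$. As $u_0,u_1$ are local minimizers with $u_0=u_1$ on $\pa A$, every $u_t$ shares the same boundary values, so the convexity inequality $\II_A(u_t)\le(1-t)\II_A(u_0)+t\II_A(u_1)$ is saturated and each $u_t$ is itself a local minimizer. Writing $F(p):=f(|p|)$, this forces the pointwise equality
\[
F\bigl((1-t)Du_0+tDu_1\bigr)=(1-t)F(Du_0)+tF(Du_1)\quad\text{a.e.\ in }A.
\]
By (f2), $F$ is strictly convex on the open set $\{|p|>\si\}$, its Hessian there having the positive eigenvalues $f''(|p|)$ radially and $f'(|p|)/|p|$ tangentially. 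Hence for a.e.\ $x\in A$ either $\max\{|Du_0(x)|,|Du_1(x)|\}\le\si$ or $Du_0(x)=Du_1(x)$, so $Du_0=Du_1$ a.e.\ on $E_0\cup E_1$ and, in particular, $E_0=E_1$ up to a null set; call this common set $E$, nonempty by the hypothesis $|E_0\cup E_1|>0$.

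Next I would invoke elliptic regularity for the Euler equation $-\diver a(Du_j)=1$, uniformly elliptic on $E$ by (f2), to conclude that $E$ is open, $u_0,u_1\in C^{1,\al}_{\mathrm{loc}}(E)$, and their gradients extend continuously to $\ovr{E}\cap A$ with $|Du_j|=\si$ on $\pa E\cap A$. In particular $Du_0=Du_1$ pointwise on $E$, so $w:=u_0-u_1$ is constant on each connected component of $E$. To rule out a component $C$ with $\ovr{C}\subset A$, note that since $f'(\si)=0$ the field $a(Du_0)$ is continuous on $A$ and vanishes on $\pa C$; testing the weak Euler equation against a Lipschitz cutoff $\phi_\ep$ supported in a small neighborhood of $\ovr{C}$ chosen to avoid other components of $E$ gives, in the limit $\ep\to0$,
\[
|C|=\lim_{\ep\to 0}\ints_A\phi_\ep\,dx=\lim_{\ep\to 0}\ints_A a(Du_0)\cdot D\phi_\ep\,dx=0,
\]
a contradiction. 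So every component of $E$ meets $\pa A$, where $w=0$, giving $w\equiv0$ on $E$. A parallel distributional argument shows that $A\setminus E$ has empty interior (else $a(Du_0)\equiv 0$ on an open set, contradicting $-\diver a(Du_0)=1$); thus $E$ is dense in $A$, and continuity of the Lipschitz function $w$ yields $w\equiv0$ on $A$, i.e.\ $u_0\equiv u_1$.

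The main obstacle is the flux argument: it requires the continuity of $Du_0$ across the free boundary $\pa E\cap A$ and a careful cutoff separating the component $C$ from other components of $E$. Should those details prove awkward, an alternative is to approximate $f$ by strictly convex Lagrangians $f_\ep(s):=f(s)+\ep s^2/2$, for which minimizers are classically unique, and pass to the limit $\ep\to0$ using the common boundary data of $u_0$ and $u_1$.
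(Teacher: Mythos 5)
Your opening step coincides with the paper's: saturate the convexity inequality for the midpoint (or the whole segment) of $u_0$ and $u_1$, and use the strict convexity of $p\mapsto f(|p|)$ on $\{|p|>\si\}$ to conclude that the gradients agree a.e.\ on $E_0\cup E_1$. Up to there you are fine (indeed you get the slightly stronger $Du_0=Du_1$ rather than just equality of the moduli). The problem is everything after that. Your argument needs $E$ to be an open set with a reasonable boundary, $u_0,u_1\in C^{1}$ up to $\ovr{E}\cap A$ with $|Du_j|=\si$ on $\pa E\cap A$, and continuity of the flux $a(Du_0)=f'(|Du_0|)Du_0/|Du_0|$ across the degenerate region. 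None of this follows from "elliptic regularity": the equation is uniformly elliptic only on $\{|Du|>\si+\ep\}$ (the tangential eigenvalue $f'(|p|)/|p|$ vanishes as $|p|\to\si^+$), and gradient continuity for minimizers of such very degenerate functionals is essentially an open problem --- the paper itself points this out in the remark after Theorem 2.1, which is precisely why the $C^1$ hypothesis near $\Ga_\de$ is \emph{assumed} there. Without that regularity, $E$ is merely a measurable set, its "connected components" and the cutoff/flux argument around a compactly contained component are not available, and the final density-plus-continuity step has nothing to stand on. The fallback via $f_\ep(s)=f(s)+\ep s^2/2$ does not repair this: convergence of the unique minimizers $u_\ep$ yields \emph{one} minimizer of the limit problem, and gives no reason why two distinct minimizers $u_0,u_1$ of the degenerate problem would both arise as such limits.

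The paper avoids all of this with a purely variational trick that you should note. From $|Du_0|=|Du_1|$ a.e.\ on $E_0\cup E_1$ and $f\equiv 0$ on $[0,\si]$ one gets $f(|Du_0|)=f(|Du_1|)$ a.e.\ in all of $A$; hence the competitor $v=\max(u_0,u_1)$ satisfies $\int_A f(|Dv|)\,dx=\int_A f(|Du_j|)\,dx$, while the linear term gives $\II_A(v)\le\II_A(u_j)$ because $v\ge u_j$. Minimality forces equality, so $\int_A(v-u_j)\,dx=0$ and $v=u_0=u_1$ a.e. This uses only measurability and the sign of the lower-order term, no regularity across $\{|Du|=\si\}$. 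As it stands, your proof has a genuine gap at its central step.
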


\begin{proof}
Let $u=\frac12\,(u_0 + u_1);$ since $f$ is convex, it is clear that $u$ is also a minimizer of $\II_A$
and $u=u_0=u_1$ on $\pa A.$
Thus, we have
\begin{equation} \label{eq intA 0}
\int_A \Bigl[ \frac12\, f(|Du_0|) + \frac12\,f(|Du_1|) - f(|D u|) \Bigr] dx = 0,
\end{equation}
and, since $f$ is convex,
\begin{equation} \label{eq intA 0 ae}
\frac12\,  f(|Du_0|) + \frac12\,  f(|Du_1|) - f(|D u|) = 0, \quad \mbox{a.e. in }\ A.
\end{equation}
Assumption (f2) on $f$ and \eqref{eq intA 0 ae} imply that
\begin{equation}\label{eq 30}
|(E_0 \cup E_1) \cap \{|Du_0| \neq |Du_1| \} | = 0,
\end{equation}
since on $(E_0 \cup E_1) \cap \{|Du_0| \neq |Du_1| \}$ the convexity of $f$ holds in the strict sense.
\par
Thus, we have proven that $|Du_0| = |Du_1|$  a.e. in $E_0 \cup E_1$ and, since
\begin{equation*}
\ints_A f(|Du_j|) dx = \ints_{E_j} f(|Du_j|) dx , \quad  j=0,1,
\end{equation*}
we have
\begin{equation*}
\ints_A f(|Du_0|) dx =\ints_A f(|Du_1|) dx .
\end{equation*}
\par
Now, take $v=\max (u_0,u_1)$; \eqref{eq 30} implies that
\begin{equation} \label{eq intA fu0 fu1}
\ints_A f(|Dv|) dx = \ints_A f(|Du_0|) dx =\ints_A f(|Du_1|) dx ,
\end{equation}
and hence
\begin{equation*}
\II_A(v) \leq \II_A(u_0)=\II_A(u_1),
\end{equation*}
since $v \geq u_0,u_1$. Thus, $\II_A(v) = \II_A(u_0)=\II_A(u_1)$; consequently
\begin{equation*}
\ints_A (v-u_j) dx = 0
\end{equation*}
for $j=0, 1$ and, since $v\geq u_0, u_1$, we have that $v=u_0=u_1.$
\end{proof}

\begin{theorem} \label{thm uniq Dir}
Let $f$ satisfy (f1) and (f2) with $\si > 0$ and assume that $u \in W_0^{1,\infty} (\Omega)$ is a solution of \eqref{I(u)}.
\par
Then we have:
\begin{enumerate}[(i)]
\item $|\{x\in \Omega:\ |Du|>\si \}|>0$;
\item $u$ is unique.
\end{enumerate}
\end{theorem}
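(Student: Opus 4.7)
My plan is to dispose of (i) first via a short contradiction based on the weak Euler--Lagrange equation \eqref{Euler eq F2I 2}, and then to deduce (ii) almost for free from (i) together with Lemma \ref{lemma uniqueness I}.

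For (i), I will use that the minimizer $u$ of $\II_\Om$ satisfies the weak equation \eqref{Euler eq F2I 2}: since $p\mapsto f(|p|)$ is convex and of class $C^1$ on $\RN$ (note that (f2) forces $f'(0)=0$, so the radial gradient $f'(|p|)p/|p|$ extends continuously by $0$ at the origin), the functional $\II_\Om$ is Gateaux differentiable on $W_0^{1,\infty}(\Om)$, hence its minimizer is a critical point and \eqref{Euler eq F2I 2} holds. Now suppose for contradiction that $|\{x\in\Om:|Du|>\si\}|=0$, i.e.\ $|Du|\leq\si$ a.e.\ in $\Om$. Assumption (f2) then gives $f'(|Du|)=0$ a.e., so the integrand on the left-hand side of \eqref{Euler eq F2I 2} vanishes and the left-hand side equals zero for every test function. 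Picking any $\phi\in C_0^1(\Om)$ with $\phi\geq 0$ and $\phi\not\equiv 0$ makes the right-hand side $\int_\Om\phi\,dx$ strictly positive: contradiction. Hence (i) holds.

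For (ii), let $u_0,u_1$ be two solutions of \eqref{I(u)}. Both are minimizers of $\II_\Om$ with boundary data zero on $\pa\Om$, in particular local minimizers of $\II_\Om$ in the sense of Lemma \ref{lemma uniqueness I}. Part (i) applied to $u_0$ gives $|E_0|>0$, where $E_0$ is defined in \eqref{Ej}; therefore $|E_0\cup E_1|\geq |E_0|>0$, and Lemma \ref{lemma uniqueness I} immediately yields $u_0\equiv u_1$.

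The only delicate step I foresee is justifying that the minimizer satisfies \eqref{Euler eq F2I 2} as a critical point --- something the excerpt essentially takes for granted in the opening of Section \ref{subsection comp results} and which follows from the $C^1(\RN)$ regularity of $p\mapsto f(|p|)$; once that is in hand, both (i) and (ii) collapse to one-line arguments.
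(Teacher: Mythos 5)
Your proof is correct and takes essentially the same route as the paper: part (i) is a contradiction drawn from the Euler--Lagrange identity \eqref{Euler eq F2I 2} once $|Du|\le\si$ a.e.\ forces $f'(|Du|)=0$ a.e., and part (ii) is exactly the paper's combination of (i) with Lemma \ref{lemma uniqueness I}. The only (harmless) difference is in how the contradiction in (i) is extracted: the paper tests against $u$ itself to get $\int_\Om u\,dx=0$, hence $\II_\Om(u)=0$, contradicting Lemma \ref{lemma sign u}, whereas you contradict the positivity of $\int_\Om\phi\,dx$ for a nonnegative nontrivial test function directly, which is slightly more economical since it bypasses Lemma \ref{lemma sign u}.
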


\begin{proof}
(i) By contradiction, assume that $|Du| \leq \si$ a.e.; since $u$ satisfies \eqref{Euler eq F2I 2}, we can easily infer that
\begin{equation*}
\ints_\Omega u dx=0.
\end{equation*}
Thus, $I(u)=0$, which contradicts Lemma \ref{lemma sign u}.

(ii) The assertion follows from (i) and Lemma \ref{lemma uniqueness I}.
\end{proof}

As already mentioned in the introduction, our proof of Theorem
\ref{thm symmetry diff functionals} makes use of the method of the
moving planes. To apply this method, we need comparison
principles for minimizers of $\II_\Om$.

\begin{proposition}[Weak comparison principle] \label{thm weak comp minimiz}
Let $A$ be a bounded domain and let $f$ satisfy (f1) and (f2).
\par
Assume that $u_0, u_1 \in W^{1,\infty}(A)$ are local minimizers of $\II_A$ such that $u_0 \leq u_1$ on $\pa A$ and suppose that
\begin{equation*}
|E_0 \cup E_1| > 0,
\end{equation*}
where the sets $E_1, E_2$ are given by \eqref{Ej}.
\par
Then $u_0 \leq u_1$ in $\overline{A}$.
\end{proposition}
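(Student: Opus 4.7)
The plan is to reduce the comparison statement to the uniqueness Lemma \ref{lemma uniqueness I} by using the standard truncation pair $v_0=\min(u_0,u_1)$ and $v_1=\max(u_0,u_1)$. These functions are in $W^{1,\infty}(A)$, and since $u_0\leq u_1$ on $\pa A$, one has $v_0=u_0$ and $v_1=u_1$ on $\pa A$. So $v_0$ is an admissible competitor against $u_0$ and $v_1$ is an admissible competitor against $u_1$.

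The first key step is the a.e.\ identity
\begin{equation*}
f(|Du_0|)+f(|Du_1|)=f(|Dv_0|)+f(|Dv_1|)\quad\text{a.e. in }A,
\end{equation*}
which follows from Stampacchia's lemma: on $\{u_0<u_1\}$ we have $Dv_0=Du_0$, $Dv_1=Du_1$; on $\{u_0>u_1\}$ the roles swap; on $\{u_0=u_1\}$ one has $Du_0=Du_1=Dv_0=Dv_1$ a.e. Combined with $v_0+v_1=u_0+u_1$, this gives $\II_A(v_0)+\II_A(v_1)=\II_A(u_0)+\II_A(u_1)$. Since $u_0$ and $u_1$ are local minimizers, $\II_A(u_0)\leq\II_A(v_0)$ and $\II_A(u_1)\leq\II_A(v_1)$, so both inequalities must be equalities. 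In particular $v_0$ is itself a local minimizer with the same boundary values as $u_0$, and likewise $v_1$ is a local minimizer matching $u_1$ on $\pa A$.

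Next I would invoke Lemma \ref{lemma uniqueness I} on one of these pairs. Using the gradient description above, the sets
\begin{equation*}
\widetilde E_0=\{|Dv_0|>\si\},\qquad \widetilde E_1=\{|Dv_1|>\si\}
\end{equation*}
satisfy $E_0\cup\widetilde E_0\supseteq E_0$ and $E_1\cup\widetilde E_1\supseteq E_1$ (up to null sets). The hypothesis $|E_0\cup E_1|>0$ therefore splits into two cases: if $|E_0|>0$, Lemma \ref{lemma uniqueness I} applied to $u_0$ and $v_0$ yields $u_0\equiv v_0$, hence $u_0\leq u_1$; if $|E_1|>0$, the same lemma applied to $u_1$ and $v_1$ gives $u_1\equiv v_1$, again yielding $u_0\leq u_1$.

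The step I expect to require the most care is the gradient identity on $\{u_0=u_1\}$: one has to justify that $Du_0=Du_1$ a.e.\ on this coincidence set (so that the $f(|D\cdot|)$ terms genuinely balance there), which is the classical Stampacchia zero-gradient result applied to $u_0-u_1\in W^{1,\infty}(A)$. Everything else—the admissibility of $v_0,v_1$, the case-split depending on which $E_j$ has positive measure, and the final conclusion via Lemma \ref{lemma uniqueness I}—is then straightforward.
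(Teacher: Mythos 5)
Your argument is correct, and it reaches the conclusion by a genuinely different route from the paper's. The paper argues by contradiction: assuming $B=\{u_0>u_1\}$ contains a nonempty connected open component, it shows that $u_0$ and $u_1$ are both local minimizers of $\II_B$ with equal boundary values, deduces $\int_B f(|Du_0|)\,dx>\int_B f(|Du_1|)\,dx\ge 0$ from $\II_B(u_0)=\II_B(u_1)$ and $u_0>u_1$ in $B$ (so the positivity $|E_0\cap B|>0$ needed for the uniqueness lemma is \emph{produced} by the contradiction hypothesis rather than taken from the global assumption), and then invokes Lemma \ref{lemma uniqueness I} on $B$ to force $u_0\equiv u_1$ there. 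You instead work globally on $A$ with the truncations $v_0=\min(u_0,u_1)$ and $v_1=\max(u_0,u_1)$, use the Stampacchia identity on the coincidence set to show that the pair $(v_0,v_1)$ carries exactly the same total energy as $(u_0,u_1)$, conclude that $v_0$ and $v_1$ are themselves local minimizers, and then apply the same Lemma \ref{lemma uniqueness I} to one of the pairs $(u_0,v_0)$ or $(u_1,v_1)$ according to which of $|E_0|$, $|E_1|$ is positive. Both proofs hinge on the same uniqueness lemma; yours avoids the localization to $B$ and the attendant care with its connected components, at the price of the measure-theoretic bookkeeping for $D\min$ and $D\max$, which you correctly flag as the delicate step and which is indeed standard. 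One small omission: Lemma \ref{lemma uniqueness I} is stated only for $\sigma>0$, so for $\sigma=0$ your last step needs a substitute; there $p\mapsto f(|p|)$ is strictly convex, hence the local minimizer with prescribed boundary data is unique and $u_0\equiv v_0$ follows immediately (the paper disposes of this case by citing the literature).
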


\begin{proof}
If $\si=0$ in (f2), then the weak comparison principle is well established
(see for instance Lemma 3.7 in [FGK]).

Thus, in the rest of the proof, we assume that $\si > 0$. Assume
by contradiction that $u_0>u_1$ in a non-empty open subset $B$ of $A.$
We can suppose that $B$ is connected (otherwise the
argument can be repeated for each connected
component of $B$). Observe that,
since $u_0 \leq u_1$ on $\pa A$ and $u_0$ and
$u_1$ are continuous, then $u_0=u_1$ on $\partial B$.

We now show that $u_0$ minimizes $\II_B$ among those
functions $v$ such that $\ v - u_0\in W_0^{1,\infty}(B)$. Indeed, if $\inf \II_B(v)
< \II_B(u_0)$ for one such function, then the function $w$
defined by
\begin{equation*}
w(x)= \begin{cases} v, & x \in B, \\
u_0, & x \in \overline{A} \setminus B,
\end{cases}
\end{equation*}
would belong to $W^{1,\infty}(A),$ be equal to $u_0$ on $\pa A$ and be such that
$\II_A(w) < \II_A(u_0)$ --- a contradiction.
The same argument can be repeated for $u_1,$  and hence we have proven that
$\II_B(u_0) = \II_B(u_1)$ (since $u_0=u_1$ on $\pa A$).

This last equality implies that
\begin{equation*}
\ints_B f(|D u_0|) dx > \ints_B f(|D u_1|) dx \geq 0,
\end{equation*}
since $u_0>u_1$ in $B,$ and hence $|E_0 \cap B| > 0$.

By applying Lemma \ref{lemma uniqueness I} to the functional
$\II_B$, we obtain that $u_0 \equiv u_1$ in $B,$ that gives a contradiction.
\end{proof}

\begin{proposition}[Strong comparison principle] \label{lemma F2I strong comparison}
Let $A$ be a bounded domain and let $f$ satisfy (f1) and (f2).
\par
Assume that $u_0,u_1 \in C^1(\overline{A})$ are local minimizers of $\II_A$ such that
$u_0 \leq u_1$ in $A$ and $|Du_0|, |Du_1|>\si$ in $\overline{A}.$
\par
Then either $u_0\equiv u_1$ or else $u_0<u_1$ in $A$.
\end{proposition}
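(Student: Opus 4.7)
The plan is to reduce the statement to the classical strong maximum principle applied to the nonnegative difference $w := u_1 - u_0$. As a preliminary step I would record the regularity of $u_0$ and $u_1$: since $|Du_j|>\si$ on the compact set $\overline{A}$, there is $\si_1>\si$ with $|Du_j|\geq \si_1$ on $\overline{A}$; and in the region $\{|p|>\si\}$ the nonlinearity $a(p):=f'(|p|)\,p/|p|$ is of class $C^{1,\al}$ by (f2), and a direct computation shows that its Jacobian $Da(p)$ is symmetric positive definite with eigenvalues $f''(|p|)$ (along $p$) and $f'(|p|)/|p|$ (orthogonal to $p$). Therefore each $u_j$ is a classical solution of the uniformly elliptic quasilinear equation $-\diver(a(Du_j))=1$ on $A$ and, by standard elliptic regularity, belongs to $C^{2,\al}(A)$, so that both Euler equations hold pointwise.

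Next, I would argue by contradiction: assume there is $x_0\in A$ with $w(x_0)=0$ and aim at $w\equiv 0$ on $A$. Since $w\in C^1(\overline{A})$ is nonnegative and attains an interior minimum at $x_0$, one has $Dw(x_0)=0$, that is $Du_0(x_0)=Du_1(x_0)=:p_0$ with $|p_0|\geq\si_1>\si$. By continuity of $Du_0$ and $Du_1$, I can then choose $r>0$ so small that $\overline{B}_r(x_0)\subset A$ and that for every $x\in B_r(x_0)$ and every $t\in[0,1]$ the convex combination $(1-t)Du_0(x)+tDu_1(x)$ stays in $\{|p|>\si\}$. Subtracting the two Euler equations and writing
\[
a(Du_1)-a(Du_0)=\left(\int_0^1 Da\bigl((1-t)Du_0+tDu_1\bigr)\,dt\right)Dw
\]
on $B_r(x_0)$, I obtain a homogeneous linear divergence-form equation $\diver(M(x)\,Dw)=0$, where $M(x)$ is uniformly positive definite on $\overline{B}_r(x_0)$ thanks to the choice of $r$, and has bounded continuous entries.

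Since this equation has no zero-order term and $w\geq 0$ attains an interior minimum at $x_0$, the classical strong maximum principle forces $w\equiv 0$ on $B_r(x_0)$. A standard open-closed argument then concludes: $\{w=0\}$ is closed in $A$ by continuity and open in $A$ by what we just proved, so connectedness of $A$ yields either $\{w=0\}=A$ (i.e.\ $u_0\equiv u_1$) or $\{w=0\}=\emptyset$ (i.e.\ $u_0<u_1$ in $A$). The main technical obstacle I anticipate is the degeneracy of $a$ across $\{|p|=\si\}$: convex combinations of two vectors of norm $>\si$ need not themselves have norm $>\si$, so the linearization cannot be performed globally, and a crude attempt would produce an equation whose coefficients are undefined or singular. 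What saves the argument is precisely the identity $Du_0(x_0)=Du_1(x_0)$, forced by $x_0$ being an interior minimum of $w$, which confines both gradient fields to a neighbourhood of the common value $p_0$ well inside the elliptic region and allows the linearization to go through on a sufficiently small ball.
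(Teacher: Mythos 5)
Your proof is correct and is essentially a self-contained reconstruction of what the paper obtains in one line by citing Serrin's Theorem 1 in \cite{Se1} (the strong maximum principle for uniformly elliptic quasilinear equations), the uniform ellipticity coming, as you note, from (f2) together with $|Du_0|,|Du_1|>\si$ on the compact set $\overline{A}$. The one genuine subtlety --- that the segment joining $Du_0(x)$ and $Du_1(x)$ may leave the elliptic region $\{|p|>\si\}$, so the linearization must be localized near a touching point, where $Dw=0$ forces the two gradients to coincide --- is exactly the point you identify and handle correctly.
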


\begin{proof}
Being $u_0$ and $u_1$ solutions of \eqref{Euler eq F2I 2} with $|Du_0|, |Du_1|>\si$, the assertion is easily implied by Theorem 1 in \cite{Se1}, since, by (f2),
the needed uniform ellipticity is easily verified.
\end{proof}

\begin{proposition}[Hopf comparison principle] \label{lemma F2I boundary point}
Let $u_0,u_1 \in C^2(\overline{A})$ be functions satisfying the assumptions of
Proposition \ref{lemma F2I strong comparison}.
Assume that $u_0=u_1$ at some point $P$ on the boundary of $A$ admitting an internally touching tangent sphere.
\par
Then, either $u_0 \equiv u_1$ in $A$ or else
\begin{equation*}
u_0 < u_1 \textmd{ in } A \quad \textmd{ and } \quad \dfrac{\pa u_0}{\pa \nu} < \dfrac{\pa u_1}{\pa \nu} \textmd{ at } P;
\end{equation*}
here, $\nu$ denotes the inward unit normal to $\pa A$ at $P$.
\end{proposition}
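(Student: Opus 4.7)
The plan is to combine the strong comparison principle just proved (Proposition \ref{lemma F2I strong comparison}) with a classical Hopf boundary point lemma, applied to a linearization of the quasilinear Euler equation. First, Proposition \ref{lemma F2I strong comparison} gives the dichotomy: either $u_0\equiv u_1$ in $A$ (in which case the conclusion is automatic), or $u_0<u_1$ throughout $A$. From now on I assume the second alternative and set $w=u_1-u_0\in C^2(\ovr{A})$, so that $w>0$ in $A$ and $w(P)=0$.

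Next, I would write the Euler equation \eqref{Euler eq F2I 1} in the quasilinear form $-A^{ij}(Du)\,D_{ij}u=1$, with coefficient matrix
$$A^{ij}(p)=\frac{\pa}{\pa p_j}\!\left(\frac{f'(|p|)}{|p|}\,p_i\right).$$
Subtracting the two equations for $u_0$ and $u_1$ and applying the fundamental theorem of calculus to $A^{ij}(sDu_1+(1-s)Du_0)$ for $s\in[0,1]$ yields a linear equation of the form
$$a^{ij}(x)\,D_{ij}w+b^k(x)\,D_k w=0 \ \mbox{ in } \ A,$$
with no zeroth order term, where $a^{ij}(x)=A^{ij}(Du_1(x))$ and $b^k(x)$ involves averages of $\pa_{p_k}A^{ij}$ times $D_{ij}u_0$. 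The uniform ellipticity of $a^{ij}$, together with the boundedness of $a^{ij}$ and $b^k$ on $\ovr{A}$, is exactly what (f2) buys us: since $u_0,u_1\in C^1(\ovr{A})$ and $|Du_j|>\si$ on the compactum $\ovr{A}$, one has $|Du_j|\geq\si+\ep$ for some $\ep>0$, and on the range of the gradients $f''$ is bounded above and below by positive constants and is H\"older continuous by (f2); the $C^2$-regularity of $u_0,u_1$ up to the boundary then bounds $b^k$.

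With this linear uniformly elliptic equation for $w\geq 0$ in hand, and with $w(P)=0$ at the boundary point $P$ admitting an internally touching tangent ball, the classical Hopf boundary point lemma yields $\pa w/\pa\nu>0$ at $P$, which is precisely $\pa u_0/\pa\nu<\pa u_1/\pa\nu$ at $P$; equivalently, one may simply invoke the boundary point lemma (Theorem 2) of Serrin \cite{Se1} directly on the quasilinear equation, in the same way that Proposition \ref{lemma F2I strong comparison} invoked his Theorem 1. The main obstacle is purely bookkeeping: checking that the linearized equation has bounded coefficients and is uniformly elliptic on $\ovr{A}$, which rests on the non-degeneracy assumption $|Du_j|>\si$ and would fail without it. No regularity of $\pa A$ away from $P$ is needed, since the interior sphere condition at $P$ is assumed in the hypothesis.
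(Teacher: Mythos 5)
Your proposal is correct and follows essentially the same route as the paper, which simply invokes Theorem 2 of Serrin \cite{Se1} after noting that the non-degeneracy $|Du_j|>\si$ on $\ovr{A}$ together with (f2) gives the required uniform ellipticity, exactly as in the proof of Proposition \ref{lemma F2I strong comparison}. Your explicit linearization is just the standard mechanism underlying Serrin's theorem, and you correctly identify that the extra $C^2(\ovr{A})$ hypothesis is what keeps the first-order coefficients bounded.
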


\begin{proof}
The assertion follows from Theorem 2 in \cite{Se1} by using an argument analogous to the one in the proof of Proposition \ref{lemma F2I strong comparison}.
\end{proof}

We conclude this subsection by giving a lower bound for $|Du|$ on $\Gamma_\de$
when $u$ is the function considered in Theorem \ref{thm symmetry diff functionals}.

\begin{lemma} \label{lemma F2I gradient boundary}
Let $\Omega,\, G$ and $f$ satisfy the assumptions of Theorem \ref{thm symmetry diff functionals}.

Let $u\in W_0^{1,\infty}(\Omega)$ be a minimizer of \eqref{I(u)}
satisfying \eqref{OVDTcond}. For $x_0\in \Gamma_\de,$ let $y_0\in \pa
\Omega$ be such that $\dist(y_0,\Gamma_\de)=\de$ and set $\nu = \frac{x_0-y_0}{\de}$; denote by $\rho=\rho(x_0)$ the radius of the optimal interior ball at $x_0$.
\par
Then,
\begin{equation}\label{eq F2I gradient bound}
\liminf_{t\to 0^+} \frac{u(x_0+t \nu) - u(x_0)}{t} \geq g'\Big(
\frac{\rho}{N} \Big),
\end{equation}
where $g$ is the Fenchel conjugate of $f$.

In particular, we have that $\ \inf\limits_{\Gamma_\delta} |Du| > \sigma$ in two cases:
\begin{itemize}
\item[(i)] if $u\in C^1(\Gamma_\delta)$;
\item[(ii)] if $u$ is differentiable at every $x\in\Gamma_\de$ and $G$ satisfies the uniform interior sphere condition.
\end{itemize}
\end{lemma}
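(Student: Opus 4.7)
The plan is to compare $u$ locally near $x_0$ with the explicit radial solution in the optimal interior ball of $G$ at $x_0$. Let $z_0 = x_0 + \rho\,\nu$; then $B_\rho(z_0) \subset G$, $x_0 \in \pa B_\rho(z_0) \cap \pa G$, and the inward normal to $B_\rho(z_0)$ at $x_0$ is $\nu$ (indeed, $\nu = \nabla d(x_0)$ is the inward normal to $G$ at $x_0$ because $\Ga_\de$ is a level surface of $d$). Define
\[
v(x) = c + \int_{|x-z_0|}^{\rho} g'\!\Big(\frac{s}{N}\Big)\,ds \qquad \text{on } \overline{B_\rho(z_0)},
\]
which, up to a translation and a vertical shift, is the ball solution \eqref{eq soluzione palla F2I}. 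Hence $v = c$ on $\pa B_\rho(z_0)$, $v$ is the unique local minimizer of $\II_{B_\rho(z_0)}$ with its boundary data, and $|Dv(x)| = g'(|x - z_0|/N) > \sig$ for every $x \neq z_0$.

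Before invoking comparison, I would establish the preliminary fact that $u \geq c$ on $\ovr{G}$. The argument is a truncation: the competitor equal to $\max(u,c)$ on $\ovr{G}$ and to $u$ on $\Om \setminus \ovr{G}$ lies in $W_0^{1,\infty}(\Om)$ (it glues continuously across $\pa G$, where $u = c$), and on $G \cap \{u < c\}$ the integrand for the competitor equals $f(0) - c = -c$, strictly below $f(|Du|) - u \geq -u > -c$; minimality of $u$ forces this set to have measure zero, and continuity of $u$ yields $u \geq c$ on $\ovr{G}$. In particular $u \geq v$ on $\pa B_\rho(z_0) \subset \ovr{G}$.

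I would then apply the weak comparison principle, Proposition \ref{thm weak comp minimiz}, to $v$ and $u$ on $A = B_\rho(z_0)$. Both are local minimizers of $\II_A$ (for $u$, any $A$-competitor can be extended by $u$ outside $A$ without leaving $W_0^{1,\infty}(\Om)$), the ordering $v \leq u$ holds on $\pa A$, and the measure condition is automatic since $|Dv| > \sig$ almost everywhere. Hence $u \geq v$ on $\overline{A}$. Now $u(x_0) = c = v(x_0)$, and for small $t > 0$ one has $|x_0 + t\nu - z_0| = \rho - t$, so
\[
\frac{v(x_0 + t\nu) - v(x_0)}{t} = \frac{1}{t}\int_{\rho - t}^{\rho} g'\!\Big(\frac{s}{N}\Big)\,ds \longrightarrow g'\!\Big(\frac{\rho}{N}\Big),
\]
by continuity of $g'$ at $\rho/N > 0$; taking $\liminf$ in $u(x_0 + t\nu) \geq v(x_0 + t\nu)$ yields \eqref{eq F2I gradient bound}.

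For the last two assertions, note that $g' = (f')^{-1}$ satisfies $g'(t) > \sig$ for every $t > 0$, so the interior sphere condition at each $x_0 \in \Ga_\de$ already gives $g'(\rho(x_0)/N) > \sig$ pointwise. Under (i), differentiability at $x_0$ upgrades the liminf to the directional derivative $Du(x_0) \cdot \nu$, whence $|Du(x_0)| \geq g'(\rho(x_0)/N) > \sig$ at every $x_0 \in \Ga_\de$; since $|Du|$ is continuous on the compact set $\Ga_\de$, its infimum is attained and is therefore strictly above $\sig$. Under (ii), the uniform interior sphere condition provides $\rho_0 = \inf_{\Ga_\de} \rho(x_0) > 0$, and monotonicity of $g'$ yields the uniform bound $|Du(x_0)| \geq g'(\rho_0/N) > \sig$ on $\Ga_\de$. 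The principal technical point is the preliminary inequality $u \geq c$ on $\ovr G$, which is what unlocks the weak comparison principle on the interior ball; everything else is then routine manipulation of the explicit formula for $v$.
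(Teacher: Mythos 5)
Your proof is correct and follows essentially the same route as the paper: comparison of $u$ with the explicit radial minimizer on the optimal interior ball tangent to $\Ga_\de$ at $x_0$, via the weak comparison principle (Proposition \ref{thm weak comp minimiz}), followed by the one-dimensional computation of the directional derivative of the radial solution. The only (minor) divergence is in the preliminary step $u\ge c$ on $\ovr{G}$, which you obtain by a direct truncation of the competitor, while the paper deduces it by applying Lemma \ref{lemma sign u} to $u-c$ viewed as a local minimizer of $\II_G$; both arguments are valid, and your explicit justification of $g'(t)>\si$ for $t>0$ and of assertions (i)--(ii) fills in details the paper leaves as ``straightforward.''
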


\begin{proof}
Since $u-c$ minimizes the functional $\II_{G}$ among the functions vanishing on $\Gamma_\de,$
then Lemma \ref{lemma sign u} implies that $u \geq c$ in $\overline{G}$.

Let $B_\rho \subset G$ be the ball of radius $\rho$ tangent to $\Gamma_\de$ in
$x_0$. The minimizer $w$ of $\II_{B_\rho}$ with $w=c$ on $\pa B_\rho$
is then $w=c + u_\rho,$ where $u_\rho$ given by \eqref{eq soluzione palla F2I} with $R=\rho$; notice that $u(x_0)=w(x_0)=c$.
\par
Since $u\geq c \equiv w$ on $\pa B_\rho$, Proposition \ref{thm weak comp minimiz}
yields $u \geq w$ in $B_{\rho}$ and thus
\begin{equation*}
\liminf_{t\to 0^+} \frac{u(x_0 + t\nu) -c}{t} \geq \frac{\pa w}{\pa
\nu}(x_0)=g'\Big( \frac{\rho}{N} \Big).
\end{equation*}
The last part of the lemma (assertions (i) and (ii)) is a straightforward consequence of \eqref{defD} and \eqref{eq F2I gradient bound}.
\end{proof}

\subsection{The proof of Theorem \ref{thm symmetry diff functionals}} \label{subsection proof of thm}
We initially proceed as in \cite{Se2} (see also \cite{Fr}) and further introduce
the necessary modifications as done in \cite{MS}-\cite{MS2}. For $\xi \in\RR^N$ with $|\xi|=1$ and $\lam\in\RR,$ we denote by $\RRlxi x$ the reflection
$x+2(\lam - x\cdot\xi )\,\xi$ of any point $x\in\RR^N$ in the hyperplane
\begin{equation*}
\pi_\lam= \{x\in\RR^N:\ x\cdot \xi = \lam \},
\end{equation*}
and set
\begin{equation*}
u^\lam(x)=u(\RRlxi x) \ \mbox{ for } \ x\in\RRlxi(\Om).
\end{equation*}
Then, for a fixed direction $\xi$, we define the \emph{caps}
$$
G_\lam=\{ x\in G: x\cdot\xi>\lam\} \ \mbox{ and } \ \Om_\lam=\{ x\in\Om: x\cdot\xi>\lam\},
$$
and set
\begin{eqnarray*}
&&\bar{\lam}=\inf\{\lam\in\RR: G_\lam=\varnothing \} \ \mbox{ and }\\
&&\lam^*=\inf\{\lam\in\RR: \mathcal{R}_\mu(G_\mu) \subset G
\ \mbox{ for every } \ \mu\in(\lam,\bar{\lam})\}.
\end{eqnarray*}
\par
As is well-known from Serrin \cite{Se2}, if we assume that $\Om$ (and hence $G$) is not $\xi$-symmetric,
then for $\lam=\lam^*$ at least one of the following two cases occurs:
\begin{enumerate}[(i)]
\item $G_\lam$ is internally tangent to $\pa G$ at some point $P\in \pa G$
not in $\pi_\lam,$ or
\item $\pi_\lam$ is orthogonal to $\pa G$ at some point $Q$.
\end{enumerate}
\par
Now, the crucial remark is given by the following lemma, whose proof is
an easy adaptation of those of \cite{MS2}[Lemmas 2.1 and 2.2].

\begin{lemma}
\label{le:exterior sum}
 Let $G$ satisfy assumption \eqref{defD}. Then we have
\begin{itemize}
\item[(i)]
$\Om=G+B_\de(0)=\{ x+y: x\in G, y\in B_\de(0)\};$
\item[(ii)] if $\mathcal{R}_\lam(G_\lam)\subset G,$ then $\mathcal{R}_\lam(\Om_\lam)\subset \Om.$
\end{itemize}
\end{lemma}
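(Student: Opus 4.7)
The plan is to prove (i) by direct double inclusion and then to deduce (ii) from (i) via the reflection identity $\mathcal{R}_\lam(x+y)=\mathcal{R}_\lam x+Ty$, where $T\colon y\mapsto y-2(y\cdot\xi)\xi$ denotes the linear reflection across $\xi^\perp$. For (i), the easy inclusion $G+B_\de(0)\subset\Om$ follows from the observation that $d>\de$ on $G$ (a consequence of $\pa G=\Ga_\de=\{d=\de\}$, $\ovr G\subset\Om$, and connectedness of $G$) combined with the $1$-Lipschitz continuity of $d$: for $x\in G$ and $y\in B_\de(0)$ we get $d(x+y)\geq d(x)-|y|>0$, so $x+y\in\Om$.

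For the reverse inclusion $\Om\subset G+B_\de(0)$, I would argue as follows. Let $z\in\Om$; if $d(z)>\de$ then $z\in G$ and the choice $x=z$, $y=0$ works. Otherwise pick $w\in\pa\Om$ with $|z-w|=d(z)$, set $\nu=(z-w)/d(z)$, and consider $p:=w+\de\,\nu$. The key sub-claim is $p\in\Ga_\de$: the bound $d(p)\leq|p-w|=\de$ is automatic, while the reverse inequality follows from the parallel surface structure together with the $1$-Lipschitz property of $d$ along $[w,p]$, which in our setting (by the $C^1$-regularity of $\Ga_\de$) cannot ``skip over'' the level set $\{d=\de\}$ before reaching $p$. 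Once $p\in\Ga_\de$, the interior sphere condition at $p$ supplies a ball $B_r(c)\subset G$ tangent to $\pa G$ at $p$, and any $x\in B_r(c)$ sufficiently close to $p$ satisfies $x\in G$ and $|x-z|\leq|x-p|+|p-z|<\varepsilon+(\de-d(z))<\de$, establishing (i).

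For (ii), given $z\in\Om_\lam$, suppose one can write $z=x+y$ with $x\in G_\lam$ and $y\in B_\de(0)$. The hypothesis $\mathcal{R}_\lam(G_\lam)\subset G$ then yields $\mathcal{R}_\lam x\in G$; since $T$ is an isometry, $|Ty|=|y|<\de$; and the reflection identity gives $\mathcal{R}_\lam z=\mathcal{R}_\lam x+Ty\in G+B_\de(0)=\Om$ by (i). Thus the whole argument reduces to the one-sided refinement $\Om_\lam\subset G_\lam+B_\de(0)$, namely that the decomposition in (i) can be arranged with $x\in G_\lam$ rather than merely $x\in G$.

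The main obstacle is precisely this refinement. To obtain it, I would rerun the construction of $p$ above, choosing $w\in\pa\Om$ so that the associated $p\in\Ga_\de$ satisfies $p\cdot\xi>\lam$ whenever $z\cdot\xi>\lam$; the hypothesis $\mathcal{R}_\lam(G_\lam)\subset G$, transferred to $\pa\Om$ via the parallel surface parameterization $p\mapsto p+\de\,\nu(p)$ of $\pa\Om$ by $\Ga_\de$ (with $\nu$ the outward unit normal to $\Ga_\de$), should make such a selection possible. This is where the $C^1$-regularity of $\Ga_\de$ and the interior sphere condition are used most heavily, and where I expect the bulk of the technical work to lie.
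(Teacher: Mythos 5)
A preliminary remark: the paper does not write this proof out (it refers to \cite{MS2}, Lemmas 2.1 and 2.2), so I am judging your argument on its own merits. The decisive problem is part (ii). You reduce (ii) to the refinement $\Om_\lam\subset G_\lam+B_\de(0)$ and announce that the bulk of the work lies in proving it; but that inclusion is \emph{false}, already in the model case $\Om=B_R(0)$, $G=B_{R-\de}(0)$, $\xi=e_N$, $0<\lam<R-\de$ (where the hypothesis $\mathcal{R}_\lam(G_\lam)\subset G$ does hold). Indeed, take $z=(a,0,\dots,0,\lam+\ep)$ with $0<\ep<\lam\de/(R-\de)$ and $a$ slightly smaller than $\sqrt{R^2-(\lam+\ep)^2}$, so that $z\in\Om_\lam$ and $|z|\to R$. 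Since $|x|+|x-z|<R$ for every $x\in G\cap B_\de(z)$ while $|x|+|x-z|\ge |z|\to R$, the set $G\cap B_\de(z)$ collapses onto the radial projection $(R-\de)z/|z|$, whose $N$-th coordinate tends to $(R-\de)(\lam+\ep)/R<\lam$; hence for $a$ close enough to $\sqrt{R^2-(\lam+\ep)^2}$ no point of $G_\lam$ lies within distance $\de$ of $z$. The correct route is shorter and needs no refinement of (i): write $z=x+y$ with $x\in G$, $|y|<\de$. If $x\cdot\xi>\lam$, your computation with $\mathcal{R}_\lam z=\mathcal{R}_\lam x+Ty$ works. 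If $x\cdot\xi\le\lam<z\cdot\xi$, use instead that reflection in $\pi_\lam$ does not increase distances to points of the half-space $\{x\cdot\xi\le\lam\}$: writing $s=z\cdot\xi-\lam>0$ and $t=x\cdot\xi-\lam\le 0$, one has $(s+t)^2\le(s-t)^2$, so $|\mathcal{R}_\lam z-x|\le|z-x|<\de$ and $\mathcal{R}_\lam z\in x+B_\de(0)\subset G+B_\de(0)=\Om$ directly.

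Part (i) follows a reasonable strategy, but two steps are asserted rather than proved. First, ``$d(z)>\de$ implies $z\in G$'' is exactly the claim $\{d>\de\}\subset G$; this is where the hypothesis $\pa G=\Ga_\de$ (i.e., the level set $\{d=\de\}$ has no pieces other than $\pa G$) must actually be invoked, and it requires an argument ruling out components of $\{d>\de\}$ disjoint from $G$. Second, the identity $d(w+t\nu)=t$ is guaranteed by the triangle inequality only for $t\le d(z)$; beyond $z$ the point $w$ need no longer be the nearest boundary point, so $d(p)\ge\de$ for $p=w+\de\nu$ does not follow from Lipschitz continuity, and ``cannot skip over the level set'' is not a proof --- a priori $d$ could remain strictly below $\de$ on all of $(d(z),\de]$, or the segment could even leave $\Om$. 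Both gaps are repairable (for instance by showing that $G+B_\de(0)$ is both open and relatively closed in $\Om$, using the interior sphere condition to handle limit points with $x\in\pa G$), but as written the reverse inclusion in (i) is incomplete, and (ii) rests on a false intermediate statement.
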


Let $\Omega_\lam'$ denote the connected component of $\mathcal{R}_\lam(\Om_\lam)$
whose closure contains $P$ or $Q$.  We notice that, since $u$ is of class $C^1$ in a neighborhood of $\Gamma_\de$, Lemma \ref{lemma F2I gradient boundary} implies that that $|Du|$ is bounded away from $\sigma$ in the closure of a set $A_\de \supset \Gamma_\de.$
This information on $|Du|$ guarantees that Proposition \ref{thm weak comp minimiz} can be applied to
the two (local) minimizers $u$ and $u^\lam$ of $\II_{\Omega_\lam'}:$
since $u \geq u^\lam$ on $\pa \Omega_\lam'$ (and $|A_\de \cap \Omega_\lam|,$ $|A_\de \cap \Omega_\lam'|>0$),
then $u \geq u^\lam$ in $\Omega_\lam'$.

If case (i) occurs,
we apply Proposition \ref{lemma F2I strong comparison} to $u$ and $u^\lam$ in $A_\de \cap \Omega_\lam'$
and obtain that $u>u^\lam$ in $A_\de \cap \Omega_\lam'$,
since $u \not\equiv u^\lam$ on $\Gamma_\de \cap \Omega_\lam'$. This is a contradiction, since $P$
belongs both to $A_\de \cap \Omega_\lam'$ and $\Gamma_\de \cap \mathcal{R}_\lam (\Gamma_\de),$ and hence $u(P) = u^\lam(P)$.

Now, let us consider case (ii). Notice that  $\xi$ belongs to the
tangent hyperplane to $\Gamma_\de$ at $Q$. Since $u\in C^1(A_\de)$ and $|Du|$ is bounded away from $\sigma$ in the closure of  $A_\de$, standard elliptic regularity theory (see \cite{To1} and \cite{GT}) implies that $u\in C^{2,\ga}(A_\de)$ for some $\ga \in (0,1)$.
Thus, applying Proposition \ref{lemma F2I boundary point}
to $u$ and $u^\lam$ in $A_\de \cap \Omega_\lam'$ yields
\begin{equation*}
\dfrac{\pa u}{\pa \xi}(Q) < \dfrac{\pa u^\lam}{\pa \xi}(Q).
\end{equation*}
On the other hand, since $\Gamma_\de$ is a level surface of $u$ and $u$ is differentiable at $Q$, we must have that
\begin{equation} \label{eq contr case ii}
\dfrac{\pa u}{\pa \xi}(Q) = \dfrac{\pa u^\lam}{\pa \xi}(Q) = 0;
\end{equation}
this gives the desired contradiction and concludes the proof of the theorem.


\begin{rmk}
Notice that, if we assume that $\si=0$, then the
assumption that $u$ is of class $C^1$ in a neighborhood of
$\Gamma_\de$ can be removed from Theorem \ref{thm symmetry diff
functionals}. Indeed, from elliptic regularity theory we have that $u\in C^{2,\ga}(\Omega \setminus \{Du =
0\})$, for some $\ga \in (0,1)$; from Lemma \ref{lemma F2I
gradient boundary} we know that $Du \neq 0$ on $\Gamma_\de$ and thus
$u\in C^{2,\ga}$ in an open neighborhood of $\Gamma_\de$. As far as we know, few regularity results are available in literature for the case $\si>0$ (see \cite{Br},\cite{BCS},\cite{CM} and \cite{SV}); since H\"{o}lder estimates for the gradient are missing, we have to assume that $u$ is continuously differentiable in a neighborhood of $\Gamma_\de$.
\end{rmk}


\begin{rmk}
We notice that \eqref{eq contr case ii} holds under the weaker assumption that $u$ is Lipschitz continuous as
we readily show.

Let $\xi$ and $Q$ be as in the proof of Theorem \ref{thm symmetry diff functionals}. For $\ep > 0$ small enough, we denote by $y(Q - \ep \xi)$ the projection of $Q - \ep \xi$ on $\Gamma_\de$. Since $\Gamma_\de$ is a level surface of $u$,
\begin{equation*}
u(Q-\ep \xi) - u(Q) =u(Q-\ep \xi) - u(y(Q-\ep \xi)),
\end{equation*}
and, being $u$ Lipschitz continuous, we have
\begin{equation*}
|u(Q-\ep \xi) - u(y(Q-\ep \xi))| \leq L |Q-\ep \xi - y(Q-\ep \xi)|,
\end{equation*}
for a positive constant $L$ independent of $\ep,\: \xi$ and $Q$. Since $\xi$ is a vector belonging to the tangent hyperplane to $\Gamma_\de$ at $Q$, we have that
\begin{equation*}
|Q-\ep \xi - y(Q-\ep \xi)| = o (\ep),
\end{equation*}
as $\ep \to 0^+$ and thus
\begin{equation*}
\lim_{\ep \to 0^+} \frac{u(Q-\ep \xi) - u(Q)}{\ep} = 0.
\end{equation*}
A similar argument applied to $u^\lam$ yields
\begin{equation*}
\lim_{\ep \to 0^+}  \frac{u^\lam(Q-\ep \xi) - u^\lam(Q)}{\ep} = 0,
\end{equation*}
and hence \eqref{eq contr case ii} holds.
\end{rmk}

\section{A class of non-differentiable functionals}\label{section minima not diff}
In this section we consider the variational problem \eqref{I(u)} and assume that $f$ satisfies (f1) and

\begin{itemize}
\item[(f3)] $f'(0) >0$, $f\in C^{2,\al}(0,+\infty)$ and $f''(s)>0$ for every $s>0$, with $0<\al<1$. \label{F2.II}
\end{itemize}

In this case, the function $s \to f(|s|)$ is not differentiable at
the origin and a minimizer of \eqref{I(u)} satisfies a
variational inequality instead of an Euler-Lagrange equation.
\par
By this non-differentiability of $\II_\Om,$
it may happen that $u \equiv 0$ is the minimizer of \eqref{I(u)}
when $\Om$ is ``too small'' (see Theorem \ref{lemma cheeger});
thus, it is clear that the symmetry result of Theorem \ref{thm symmetry diff
functionals} does not hold in the stated terms.
In Theorem \ref{thm symmetry not-diff functionals}, we will state
the additional conditions that enable us to extend Theorem \ref{thm symmetry diff functionals}
to this case.
\par
We begin with a characterization of solutions to \eqref{I(u)}.

\begin{proposition} \label{prop F2II diseq EL u}
Let $f$ satisfy (f1) and (f3). Then we have:
\begin{itemize}
\item[(i)] \eqref{I(u)} has a
unique solution $u;$

\item[(ii)] $u$ is characterized by the boundary condition, $u=0$
on $\pa \Omega,$ and the following inequality:
\begin{equation}\label{eqF2II.diseq EL u}
\Big{|} \ints_{\Omega^\sharp} f'(|Du|) \frac{Du}{|Du|} \cdot D\phi
\, dx -\ints_\Omega \phi dx \Big{|} \leq f'(0) \ints_{\Omega^0}
|D\phi| dx,
\end{equation}
for any $\phi \in C_0^1(\Omega).$ Here,
\begin{equation}\label{Omega0 e Omegadiesis}
\Omega^0 = \{ x\in\Om: Du(x) = 0\} \ \ \textmd{ and }\ \ \Omega^\sharp= \Omega \setminus \Omega^0.
\end{equation}
\end{itemize}
\end{proposition}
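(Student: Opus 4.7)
The plan is to treat (i) by the direct method together with strict convexity, and (ii) by computing the one-sided directional derivative of $\II_\Om$ and exploiting its convexity in both directions. For (i), a minimizing sequence in $W_0^{1,\infty}(\Om)$ has uniformly integrable gradients (de la Vall\'ee-Poussin together with the superlinearity in (f1)); extracting a weakly convergent subsequence in $W_0^{1,1}(\Om)$ and invoking weak lower semicontinuity (from the convexity of $p\mapsto f(|p|)$) produces a minimizer $u$ over $W^{1,1}_0(\Om)$, and the Lipschitz regularity $u\in W^{1,\infty}_0(\Om)$ follows by a standard barrier comparison with translates of the explicit radial minimizer $u_R$ of \eqref{eq soluzione palla F2I} (as in \cite{Cr1}). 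Uniqueness is immediate from strict convexity of $p\mapsto f(|p|)$ on $\RR^N$: under (f3), $f$ is strictly increasing and strictly convex on $[0,\infty)$, and strict convexity of $p\mapsto f(|p|)$ then holds by a straightforward case analysis (collinear vs.\ non-collinear pairs).

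For the necessity direction of (ii), fix $\phi\in C_0^1(\Om)$ and consider the convex function $\psi(t):=\II_\Om(u+t\phi)$, which is minimized at $t=0$, so $\psi'(0^+)\geq 0$. A pointwise computation yields
\begin{equation*}
\frac{d}{dt}f(|Du+tD\phi|)\Big|_{t=0^+}=\begin{cases} f'(|Du|)\,\dfrac{Du\cdot D\phi}{|Du|}&\text{on }\Omega^\sharp,\\[4pt] f'(0)\,|D\phi|&\text{on }\Omega^0,\end{cases}
\end{equation*}
where on $\Omega^0$ we use $|tD\phi|=t|D\phi|$ for $t\geq 0$ together with the right-differentiability of $f$ at $0$. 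The difference quotients are dominated by $f'(M)|D\phi|$ with $M=\|Du\|_\infty+\|D\phi\|_\infty<\infty$, so dominated convergence gives
\begin{equation*}
\psi'(0^+)=\int_{\Omega^\sharp} f'(|Du|)\frac{Du\cdot D\phi}{|Du|}\,dx+f'(0)\int_{\Omega^0}|D\phi|\,dx-\int_\Om\phi\,dx\geq 0.
\end{equation*}
Replacing $\phi$ with $-\phi$ flips the sign of the first and third terms but leaves the middle one invariant (since $|{-}D\phi|=|D\phi|$), giving the complementary inequality; combining the two yields \eqref{eqF2II.diseq EL u}.

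For sufficiency, first extend \eqref{eqF2II.diseq EL u} from $\phi\in C_0^1(\Om)$ to $\phi\in W^{1,\infty}_0(\Om)$ by density (both sides are continuous in $\phi$ with respect to $W^{1,1}$ convergence). Given any $v\in W^{1,\infty}_0(\Om)$, set $\phi=v-u$; convexity of $\II_\Om$ gives $\II_\Om(v)-\II_\Om(u)\geq \psi'(0^+)$ with $\psi(t):=\II_\Om(u+t\phi)$, and the formula derived above together with the extended inequality shows $\psi'(0^+)\geq 0$. Thus $u$ is a minimizer, and by (i) the unique one. The main obstacle in the argument is the careful treatment of the non-smooth set $\Omega^0=\{Du=0\}$, on which $p\mapsto f(|p|)$ fails to be differentiable at $0$; this is resolved by the one-sided pointwise computation above, and the uniform domination by $f'(M)|D\phi|$ justifies passing the derivative under the integral without any appeal to smoothness of $f(|\cdot|)$ at the origin.
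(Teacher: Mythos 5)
Your proposal is correct and follows essentially the same route as the paper: necessity of \eqref{eqF2II.diseq EL u} by taking the one-sided limit of the difference quotients of $\II_\Om(u+\ep\phi)$ separately on $\Omega^\sharp$ and $\Omega^0$ and then applying the result to $\pm\phi$, and sufficiency by the convexity of $t\mapsto f(|Du+tD\phi|)$. The only differences are that you supply details the paper leaves implicit (existence via the direct method plus barriers, the density step extending the test class to $W^{1,\infty}_0$), and these are sound.
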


\begin{proof}
Since $f$ is strictly convex, the uniqueness of a minimizer follows
easily.

Let us assume that $u$ is a minimizer and let $\phi \in
C_0^1(\Omega)$; then
\begin{equation*}
\ints_{\Omega^\sharp} \frac{ f(|Du + \ep D\phi|) - f(|Du|)}{\ep} dx
+ \ints_{\Omega^0} \frac{f(\ep |D\phi|) - f(0)}{\ep} dx -
\ints_\Omega \phi dx  \geq 0,
\end{equation*}
for $\ep>0$. By taking the limit as $\ep \to 0^+,$ we obtain one of the two
inequalities in \eqref{eqF2II.diseq EL u}. The remaining inequality is obtained
by repeating the argument with $-\phi.$

Viceversa, let us assume that $u$ satisfies \eqref{eqF2II.diseq EL u} and let $\phi \in C_0^1(\Omega)$; the convexity of the function
$t\mapsto f(|Du + t D\phi|)$ yields
\begin{equation*}
\begin{split} \II_\Om(u+\phi)-\II_\Om(u) & = \ints_{\Omega^\sharp} [f(|Du + D\phi|) - f(|Du|)]\, dx +
\ints_{\Omega^0} f(|D\phi|)\, dx - \ints_\Omega \phi\, dx \\
& \geq \ints_{\Omega^\sharp} f'(|Du|) \frac{Du}{|Du|} \cdot D\phi \,
dx +  f'(0) \ints_{\Omega^0} |D\phi| -\ints_\Omega \phi\, dx \geq 0,
\end{split}
\end{equation*}
where the last inequality follows from \eqref{eqF2II.diseq EL u};
thus, $u$ is a minimizer of \eqref{I(u)}.
\end{proof}


Next, we recall the definition of the Cheeger constant $h(\Omega)$ of a set $\Omega$ (see \cite{Ch} and \cite{KF}):
\begin{equation}\label{h cheeger set}
h(\Omega)= \inf \left\{\frac{| \pa A |}{|A|}: A\subset\Om, \pa A\cap\pa\Om=\varnothing\right\}.
\end{equation}
It is well-known (see \cite{De} and \cite{KF}) that an equivalent definition of $h(\Omega)$ is given by
\begin{equation}\label{h cheeger}
h(\Omega) = \inf_{\phi \in C_0^1(\Omega)} \dfrac{\int_\Omega |D\phi|\,
dx}{\ints_\Omega |\phi|\, dx}.
\end{equation}

\begin{theorem} \label{lemma cheeger}
Let $u$ be the solution of \eqref{I(u)}, with $f$ satisfying (f1)
and (f3).
\par
Then $u\equiv 0 $ if and only if
\begin{equation} \label{fprime h geq 1}
f'(0)h(\Omega) \geq 1.
\end{equation}
\end{theorem}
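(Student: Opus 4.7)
The plan is to prove both directions by comparing the variational inequality \eqref{eqF2II.diseq EL u} (with $u\equiv 0$ plugged in) against the Cheeger inequality \eqref{h cheeger}. The key observation is that if $u\equiv 0$, then $\Om^0=\Om$ and $\Om^\sharp=\emptyset$, so the inequality \eqref{eqF2II.diseq EL u} collapses to the single condition
\begin{equation*}
\Big| \ints_\Om \phi\,dx \Big| \leq f'(0) \ints_\Om |D\phi|\,dx \quad \text{for every } \phi\in C_0^1(\Om),
\end{equation*}
which, up to the choice of test functions, is precisely a scaled Cheeger inequality.

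For the sufficiency ($\Leftarrow$), I assume $f'(0)h(\Om)\geq 1$ and verify that $u\equiv 0$ meets the characterization of Proposition \ref{prop F2II diseq EL u}(ii). For every $\phi\in C_0^1(\Om)$ I chain the estimates
\begin{equation*}
\Big|\ints_\Om\phi\,dx\Big|\leq \ints_\Om|\phi|\,dx \leq \frac{1}{h(\Om)}\ints_\Om |D\phi|\,dx \leq f'(0)\ints_\Om|D\phi|\,dx,
\end{equation*}
using in order the triangle inequality, the Cheeger inequality \eqref{h cheeger}, and the hypothesis. Thus $u\equiv 0$ satisfies the variational inequality, and by the uniqueness part of Proposition \ref{prop F2II diseq EL u}(i), it is the minimizer.

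For the necessity ($\Rightarrow$), I assume $u\equiv 0$ is the minimizer and exploit Proposition \ref{prop F2II diseq EL u}(ii): the inequality $|\int_\Om\phi\,dx|\leq f'(0)\int_\Om |D\phi|\,dx$ holds for every $\phi\in C_0^1(\Om)$. Restricting to nonnegative $\phi$, I obtain $\int_\Om|\phi|\,dx\leq f'(0)\int_\Om|D\phi|\,dx$, i.e., $\int|D\phi|/\int|\phi|\geq 1/f'(0)$. Passing to the infimum over nonnegative test functions yields $h(\Om)\geq 1/f'(0)$. Alternatively, for any admissible set $A\subset\Om$ one approximates $\chi_A$ by a sequence $\phi_\ep\geq 0$ in $C_0^1(\Om)$ with $\int\phi_\ep\to |A|$ and $\int|D\phi_\ep|\to |\pa A|$, obtaining $|A|\leq f'(0)|\pa A|$ and thus $h(\Om)\geq 1/f'(0)$ directly from \eqref{h cheeger set}.

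The only mild technicality is the standard fact that the infimum in \eqref{h cheeger} can be computed on nonnegative test functions (or, equivalently, that \eqref{h cheeger set} and \eqref{h cheeger} give the same constant); otherwise the argument is a direct matching of the variational inequality with the Cheeger inequality and requires no additional tools beyond Proposition \ref{prop F2II diseq EL u}.
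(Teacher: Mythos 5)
Your proof is correct and follows essentially the same route as the paper: plug $u\equiv 0$ into the characterization of Proposition \ref{prop F2II diseq EL u}(ii), note that the variational inequality reduces to $|\int_\Omega\phi\,dx|\le f'(0)\int_\Omega|D\phi|\,dx$, and match this against the Cheeger constant, using the standard fact that the infimum in \eqref{h cheeger} may be taken over nonnegative test functions (the paper records this as its formula \eqref{h cheeger equiv}). No gaps.
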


\begin{proof}
We first observe that
\begin{equation} \label{h cheeger equiv}
h(\Omega) = \inf_{\phi \in C_0^1(\Omega)} \dfrac{\int_\Omega |D\phi|\,
dx}{|\ints_\Omega \phi\, dx|},
\end{equation}
since we can always assume that the minimizing sequences in \eqref{h cheeger} are made of non-negative functions.

Let us assume that $u=0$ is solution of \eqref{I(u)}, then from
\eqref{eqF2II.diseq EL u} and \eqref{h cheeger equiv} we easily get \eqref{fprime h geq 1}.
Viceversa, if \eqref{fprime h geq 1} holds, thanks to \eqref{h cheeger equiv}, $u \equiv
0$ satisfies \eqref{eqF2II.diseq EL u} and Proposition \ref{prop
F2II diseq EL u} implies that $u$ is the solution of \eqref{I(u)}.
\end{proof}

Observe that, if $\Om$ is a ball of radius $R,$ then its Cheeger constant is
\begin{equation} \label{cheeger costant for B R}
h(\Omega) = \frac{N}{R},
\end{equation}
as seen in \cite{KF}. Thus, Theorem \ref{lemma cheeger} informs us that $u\equiv 0$ is
the only minimizer of $\II_\Om$ if and only if $R\le N\,f'(0),$ i.e. if $\Om$ is small enough.
In the following proposition we get the explicit expression of
the solution of \eqref{I(u)} in a ball. Notice that in this case
the set $\Omega^0$ has always positive Lebesgue measure.

\begin{proposition} \label{prop F2II ball}
Let $f$ satisfy (f1) and (f3) and denote by $g$ the Fenchel conjugate of $f$.
Let $\Om\subset\RR^N$ be the ball of radius $R$
centered at the origin and let $u_R$ be the solution of
\eqref{I(u)}.
\par
Then $u_R$ is given by
\begin{equation} \label{eq soluzione palla F2II}
u_R(x) = \ints_{|x|}^R g'\Big( \frac{s}{N} \Big)\, ds , \quad 0\le |x|\le R.
\end{equation}
\end{proposition}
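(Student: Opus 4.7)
The plan is to apply Proposition \ref{prop F2II diseq EL u}: since that proposition already provides uniqueness, it is enough to check that the explicit function $u_R$ given by \eqref{eq soluzione palla F2II} belongs to $W_0^{1,\infty}(B_R)$ and satisfies the variational inequality \eqref{eqF2II.diseq EL u}. The boundary condition $u_R=0$ on $\partial B_R$ and the Lipschitz regularity are immediate, since $g'(s/N)$ is bounded on $[0,R]$.

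First I would differentiate the radial formula to obtain $Du_R(x) = -g'(|x|/N)\,x/|x|$. Fenchel duality with $f'(0)>0$ gives $g'(t)=0$ on $[0,f'(0)]$ and $f'(g'(t))=t$ for $t>f'(0)$, so, setting $r_0:=Nf'(0)$, one has $|Du_R(x)|>0$ iff $|x|>r_0$. If $R\le r_0$ then $u_R\equiv 0$, in agreement with Theorem \ref{lemma cheeger} via \eqref{cheeger costant for B R}. In the nontrivial case $R>r_0$, the sets introduced in \eqref{Omega0 e Omegadiesis} become $\Omega^0=\overline{B_{r_0}}$ and $\Omega^\sharp=\{r_0<|x|<R\}$, and on $\Omega^\sharp$ the duality identity reduces the integrand in \eqref{eqF2II.diseq EL u} to the very simple vector field
\begin{equation*}
f'(|Du_R|)\,\frac{Du_R}{|Du_R|} \,=\, -\frac{x}{N}.
\end{equation*}

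I would then substitute this identity and integrate by parts on $\Omega^\sharp$ using $\operatorname{div}(\phi x)=N\phi + x\cdot D\phi$; the contribution on $\partial B_R$ vanishes because $\phi\in C_0^1(B_R)$, while on $\{|x|=r_0\}$ the outer unit normal of $\Omega^\sharp$ is $-x/|x|$, producing a boundary term proportional to $\int_{|x|=r_0}\phi\,d\sigma$. Applying the same divergence identity on $B_{r_0}$ to express this spherical integral in terms of volume integrals and rearranging, the outcome is the compact formula
\begin{equation*}
\int_{\Omega^\sharp} f'(|Du_R|)\,\frac{Du_R}{|Du_R|}\cdot D\phi\,dx - \int_{B_R}\phi\,dx \,=\, \frac{1}{N}\int_{B_{r_0}} x\cdot D\phi\,dx,
\end{equation*}
whose right-hand side is bounded in modulus by $\tfrac{r_0}{N}\int_{B_{r_0}}|D\phi|\,dx = f'(0)\int_{\Omega^0}|D\phi|\,dx$, i.e.\ exactly the right-hand side of \eqref{eqF2II.diseq EL u}. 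The main obstacle is the clean bookkeeping of the boundary contributions on the sphere $\{|x|=r_0\}$ produced on either side of $\Omega^0$: the non-differentiability of $p\mapsto f(|p|)$ at the origin forces $\Omega^0$ to have positive measure, and one must check that the mismatch between $\int_{\Omega^\sharp}(\cdots)\cdot D\phi$ and $\int_{B_R}\phi$ collapses to a single volume integral over $\Omega^0$. The numerical coincidence $r_0/N=f'(0)$ is precisely what makes the final estimate match the threshold in \eqref{eqF2II.diseq EL u}, and Proposition \ref{prop F2II diseq EL u} then identifies $u_R$ as the unique solution of \eqref{I(u)}.
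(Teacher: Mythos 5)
Your proposal is correct and follows essentially the same route as the paper: the case $R\le Nf'(0)$ is handled via Theorem \ref{lemma cheeger} and \eqref{cheeger costant for B R}, and for $R>Nf'(0)$ the paper likewise identifies the vector field on $\Omega^\sharp$ as $-x/N$, applies the divergence theorem to reduce the left-hand side of \eqref{eqF2II.diseq EL u} to $\frac1N\int_{|x|<Nf'(0)}x\cdot D\phi\,dx$, bounds it by Cauchy--Schwarz, and concludes with Proposition \ref{prop F2II diseq EL u}. Your bookkeeping of the interface term on $\{|x|=Nf'(0)\}$ is the correct way to make the paper's one-line divergence-theorem step explicit.
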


\begin{proof}
Under very general assumptions on $f$, a proof of this proposition can be found in \cite{Cr2}. In the following, we present a simpler proof which is {\it ad hoc} for the case we are considering.

As we have just noticed, if $R \leq Nf'(0),$ then  $h(B_R)\,f'(0) \geq 1$ and hence
Theorem \ref{lemma cheeger} implies that the minimizer of $\II_\Om$ must vanish everywhere.
Thus, \eqref{eq soluzione palla F2II} holds, since we know that $g'=0$ in
the interval $[0, f'(0)]$ and hence in $[0, R/N].$

Now, suppose that $R > N f'(0)$ and let $\phi \in C_0^1(B_R)$.
We compute the number between the bars in \eqref{eqF2II.diseq EL u} with $u=u_R;$
we obtain that
\begin{eqnarray*}
&&\ints_{\Omega^\sharp} f'(|Du|) \frac{Du}{|Du|} \cdot D\phi
\, dx -\ints_\Omega \phi\, dx=\\
&&-\!\!\!\!\!\!\!\!\ints_{N f'(0)<|x|<R} \frac{x}{N} \cdot D\phi \, dx - \ints_{|x|<R} \phi\, dx  = \ints_{|x|<Nf'(0)} \frac{x}{N} \cdot D\phi \, dx,
\end{eqnarray*}
after an application of the divergence theorem.
Applying the Cauchy-Schwarz inequality to the last integrand, we obtain that
$$
\Big{|} \ints_{\Omega^\sharp} f'(|Du|) \frac{Du}{|Du|} \cdot D\phi
\, dx -\ints_\Omega \phi\, dx \Big{|} \leq f'(0) \ints_{|x|<Nf'(0)} |D\phi|\,dx,
$$
that is \eqref{eqF2II.diseq EL u} holds; the conclusion then follows from Proposition \ref{prop F2II diseq EL u}.
\end{proof}

In the following two lemmas we derive the weak comparison principle and
Hopf lemma that are necessary to prove our symmetry result.

\begin{lemma}[Weak comparison principle] \label{lemma weak comp princ notdiff}
Let $f$ satisfy (f1) and (f3) and let $A$ be a bounded domain. Assume that $u_0, u_1 \in W^{1,\infty}(A)$ are minimizers of $\II_A$
such that $u_0 \leq u_1$ on $\pa A$.
\par
Then $u_0 \leq u_1$ on $\overline{A}$.
\end{lemma}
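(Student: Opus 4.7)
\emph{Proof plan.} The plan is a contradiction argument by the classical $\min/\max$ truncation of convex calculus of variations. Assume $B = \{x\in A : u_0(x) > u_1(x)\}$ is nonempty and introduce
\[
v_0 = \min(u_0,u_1), \qquad v_1 = \max(u_0,u_1).
\]
Both functions lie in $W^{1,\infty}(A),$ and the hypothesis $u_0 \le u_1$ on $\pa A$ ensures that $v_0$ and $v_1$ inherit, on $\pa A,$ the traces of $u_0$ and $u_1$ respectively. In particular each $v_j$ is admissible as a competitor against $u_j$ in the local minimization defining it.

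The pivotal algebraic identity is $\II_A(v_0)+\II_A(v_1)=\II_A(u_0)+\II_A(u_1).$ The linear part is immediate from $v_0+v_1=u_0+u_1$ pointwise; for the gradient part one checks that almost everywhere on $A$ the unordered pair $\{Dv_0, Dv_1\}$ coincides with $\{Du_0, Du_1\}$, plainly on $\{u_0 \ne u_1\}$ and on the coincidence set because there $Du_0 = Du_1$ almost everywhere. Combining this identity with the two minimization inequalities $\II_A(u_j) \le \II_A(v_j)$ for $j=0,1$ forces both to be equalities, so $v_0$ is itself a local minimizer of $\II_A$ sharing the boundary trace of $u_0.$

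The final, and most delicate, step is uniqueness of the local minimizer with prescribed Dirichlet data, which is essentially the same strict-convexity argument already exploited in Proposition \ref{prop F2II diseq EL u}(i) adapted to nonzero boundary values. Averaging $u_0$ and $v_0$ produces another minimizer, and the strict convexity of $s \mapsto f(s)$ on $[0,+\infty)$ supplied by (f3) forces $|Du_0|=|Dv_0|$ almost everywhere. Since moreover $f$ is strictly increasing on $[0,+\infty)$ (because $f'(0)>0$ and $f''>0$ on $(0,+\infty)$), the triangle inequality $|D(u_0+v_0)/2|\le(|Du_0|+|Dv_0|)/2$ must in turn be an equality almost everywhere, which upgrades the coincidence of magnitudes to $Du_0=Dv_0$ almost everywhere. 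Connectedness of $A,$ together with $u_0=v_0$ on $\pa A,$ yields $u_0 \equiv v_0$ in $A,$ contradicting $B \ne \varnothing.$ I expect the main obstacle to be precisely this last step: only scalar strict convexity of $f$ is available, and one must propagate it carefully through the triangle inequality on vector gradients to pass from equal moduli to equal vectors.
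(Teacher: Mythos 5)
Your proof is correct, but it follows a different decomposition from the paper's. The paper argues directly on the violation set $B=\{u_0>u_1\}$: since $u_0=u_1$ on $\pa B$ (by continuity and the boundary ordering), the restrictions of $u_0$ and $u_1$ to $B$ are two minimizers of $\II_B$ with identical Dirichlet data, and the strict convexity of $p\mapsto f(|p|)$ guaranteed by (f3) (already invoked for uniqueness in Proposition \ref{prop F2II diseq EL u}) forces them to coincide there --- a contradiction. You instead work on all of $A$ via the truncations $\min(u_0,u_1)$, $\max(u_0,u_1)$, use the Stampacchia-type identity for their gradients to get $\II_A(v_0)+\II_A(v_1)=\II_A(u_0)+\II_A(u_1)$, and then run the strict-convexity uniqueness argument for the pair $u_0,v_0$ on $A$. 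Both routes reduce to the same uniqueness mechanism; yours has the advantage of staying on the connected domain $A$ (so the ``$Du_0=Dv_0$ a.e.\ plus equal boundary values implies identity'' step is clean) and of making explicit the passage from equal moduli to equal gradient vectors via equality in the triangle inequality, which the paper leaves to the reader; the paper's route is shorter but implicitly relies on the gluing argument (as in Proposition \ref{thm weak comp minimiz}) to see that $u_0|_B$ and $u_1|_B$ are minimizers of $\II_B$, and on $B$ possibly being handled component by component. No gap in either.
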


\begin{proof}
Let $B=\{x\in A: u_0(x)>u_1(x)\}$ and assume by contradiction
that $B \neq \varnothing$. Since $u_0 \leq u_1$ and being $u_0$ and $u_1$ both continuous, then $u_0=u_1$ on $\partial B$. Hence, $u_0$
and $u_1$ are two distinct solutions of the problem
\begin{equation*}
  \inf\bigg\{\int_B [f(|Du|)-u] dx : \  \ u(x)=u_0(x) \ \textmd{ on } \partial B \bigg\},
\end{equation*}
which is a contradiction, on account of the uniqueness of the minimizer of $\II_B$.
\end{proof}

\begin{lemma} \label{lemma F2II gradient boundary}
Let $f$ satisfy (f1) and (f3) and denote by $g$ the Fenchel conjugate of $f$. Let $\Omega \subset \RR^N$ be a bounded domain and let $G$ satisfy \eqref{defD}.

Assume that $u\in W_0^{1,\infty}(\Omega)$ is a minimizer of \eqref{I(u)}
satisfying \eqref{OVDTcond}. For $x_0\in \Gamma_\de,$ let $y_0\in \pa
\Omega$ be such that $\dist(y_0,\Gamma_\de)=\de$ and set $\nu = \frac{x_0-y_0}{\de}$; denote by $\rho=\rho(x_0)$ the radius of the optimal interior ball at $x_0$.
\par
Then,
\begin{equation*}
\liminf_{t\to 0^+} \frac{u(x_0+t \nu) - u(x_0)}{t} \geq g'\Big(
\frac{\rho}{N} \Big).
\end{equation*}

In particular, we have that $\ \inf\limits_{\Gamma_\delta} |Du| > 0$ in two cases:
\begin{itemize}
\item[(i)] if $u\in C^1(\Gamma_\delta)$ and $\rho(x_0) > Nf'(0)$ for any $x_0\in \Gamma_\delta$;
\item[(ii)] if $u$ is differentiable at every $x\in\Gamma_\de$ and $G$ satisfies the uniform interior sphere condition of radius $\rho>Nf'(0)$.
\end{itemize}
\end{lemma}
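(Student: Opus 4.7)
The plan is to follow the template of Lemma \ref{lemma F2I gradient boundary}, substituting the tools available in Section \ref{section minima not diff} for those used in Section \ref{section minima diff}. The one genuinely new difficulty is that we no longer have Lemma \ref{lemma sign u} at our disposal.

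First I would show that $u \geq c$ in $\overline{G}$. The restriction of $u$ to $G$ is a local minimizer of $\II_G$ with boundary value $c$ on $\partial G = \Gamma_\delta$; equivalently, $v := u - c$ is the solution of \eqref{I(u)} on $G$. Since $|v|$ has the same zero boundary values, $|D|v|| = |Dv|$ almost everywhere, and $|v| \geq v$, we get $\II_G(|v|) \leq \II_G(v)$, so uniqueness of the minimizer (Proposition \ref{prop F2II diseq EL u}(i)) forces $v \geq 0$ a.e., hence everywhere by continuity. This is the non-differentiable substitute for the argument based on Lemma \ref{lemma sign u}, and is the one step where the structure of the proof differs substantively from the differentiable case.

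Next I would compare $u$ with the explicit radial minimizer on the optimal interior ball. Let $B_\rho$ be the ball of radius $\rho = \rho(x_0)$ tangent to $\Gamma_\delta$ at $x_0$. By Proposition \ref{prop F2II ball}, the unique minimizer of $\II_{B_\rho}$ with boundary value $c$ on $\pa B_\rho$ is $w = c + u_\rho$, where $u_\rho$ is given by \eqref{eq soluzione palla F2II} with $R = \rho$, recentered at the center of $B_\rho$; in particular $w(x_0) = c = u(x_0)$ and $\partial w/\partial \nu(x_0) = g'(\rho/N)$. Since $u \geq c = w$ on $\pa B_\rho$ by the previous step, the weak comparison principle (Lemma \ref{lemma weak comp princ notdiff}) yields $u \geq w$ in $B_\rho$, which gives the claimed lower bound on the liminf along $\nu$.

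For the two final assertions, I would invoke the fact that by (f3) the Fenchel conjugate satisfies $g'(t) = 0$ on $[0, f'(0)]$ and $g'(t) > 0$ for $t > f'(0)$, so that $g'(\rho(x_0)/N) > 0$ if and only if $\rho(x_0) > N f'(0)$. In case (i), $|Du|$ is continuous on the compact set $\Gamma_\delta$ and is pointwise bounded below by $g'(\rho(x_0)/N) > 0$, so it is bounded away from $0$. In case (ii), the uniform interior sphere condition of radius $\rho > N f'(0)$ supplies the uniform lower bound $g'(\rho/N) > 0$, and since $u$ is differentiable at each $x \in \Gamma_\delta$ the liminf equals $Du(x)\cdot\nu \leq |Du(x)|$, which finishes the argument.
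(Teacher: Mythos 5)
Your proof is correct and follows essentially the same route as the paper's, which simply states that the argument of Lemma \ref{lemma F2I gradient boundary} adapts: nonnegativity of $u-c$ on $G$, comparison with the explicit radial minimizer $c+u_\rho$ on the optimal interior ball via the weak comparison principle (Lemma \ref{lemma weak comp princ notdiff}), and the observation that $g'>0$ exactly beyond $f'(0)$. Your explicit replacement of Lemma \ref{lemma sign u} (stated only for (f2) with $\si>0$) by the $|v|$-plus-uniqueness argument is precisely the adaptation the paper leaves implicit, so no gap remains.
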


\begin{proof}
Notice that, if $\rho > Nf'(0)$ then Proposition \ref{prop F2II ball} implies that $u_R$ given by
\eqref{eq soluzione palla F2II} with $R=\rho$ is strictly positive in a ball of radius $\rho$.
Then, the proof of this lemma can be easily adapted from the proof of Lemma \ref{lemma F2I gradient boundary}.
\end{proof}

Finally, by repeating the argument of the proof of Theorem \ref{thm symmetry diff functionals}, we have the following theorem.

\begin{theorem} \label{thm symmetry not-diff functionals}
Let $f,\,\Omega$ and $G$ satisfy the assumptions of Lemma \ref{lemma F2II gradient boundary}. Assume that $u\in W_0^{1,\infty}(\Omega)$ is the minimizer of \eqref{I(u)} and that \eqref{OVDTcond} holds.
\par
If $u$ is of class $C^1$ in a tubular neighborhood of $\Gamma_\de$ and $\rho > Nf'(0),$ then $\Omega$ must be a ball.
\end{theorem}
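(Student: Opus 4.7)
The plan is to follow, step by step, the same moving planes scheme used in the proof of Theorem \ref{thm symmetry diff functionals}, substituting each of the three analytic inputs (weak comparison, strong comparison, Hopf lemma, gradient lower bound on $\Gamma_\de$) with its counterpart adapted to the non-differentiable setting. For a unit direction $\xi$, I would keep exactly the same notation $G_\lam$, $\Om_\lam$, $\bar{\lam}$ and $\lam^*$, and invoke Lemma \ref{le:exterior sum} (whose statement depends only on \eqref{defD}) to guarantee $\mathcal{R}_{\lam^*}(\Om_{\lam^*})\subset\Om$. The same topological argument of Serrin then forces one of the two alternatives at $\lam=\lam^*$: an internal tangency at a point $P\in\pa G\setminus \pi_{\lam^*}$, or an orthogonal contact at a point $Q\in\pa G\cap\pi_{\lam^*}$.

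The key analytic step is to ensure that, on the region where the moving-planes comparison is actually carried out, we are away from the degeneracy of $f$ at the origin. This is precisely where the extra hypothesis $\rho>Nf'(0)$ enters: by Lemma \ref{lemma F2II gradient boundary}(i), it yields $\inf_{\Gamma_\de}|Du|>0$, and since $u\in C^1$ in a tubular neighborhood of $\Gamma_\de$, there is an open set $A_\de\supset\Gamma_\de$ on which $|Du|$ is bounded away from $0$. On $A_\de$, assumption (f3) makes the Euler equation uniformly elliptic with $C^{1,\al}$ coefficients, so standard regularity (as in the proof of Theorem \ref{thm symmetry diff functionals}) upgrades $u$ to $C^{2,\ga}(A_\de)$, and Serrin's theorems from \cite{Se1} then apply to $u$ and $u^\lam$ in $A_\de\cap\Omega_\lam'$, where $\Om_\lam'$ is the connected component of $\mathcal{R}_\lam(\Om_\lam)$ whose closure contains $P$ or $Q$.

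With these tools, I would conclude as follows. First, the weak comparison principle for non-differentiable functionals (Lemma \ref{lemma weak comp princ notdiff}) applied to the local minimizers $u$ and $u^\lam$ of $\II_{\Om_\lam'}$, together with $u\ge u^\lam$ on $\pa\Om_\lam'$, gives $u\ge u^\lam$ on $\overline{\Om_\lam'}$. In alternative (i), the strong comparison principle in $A_\de\cap\Om_\lam'$ forces the strict inequality $u>u^\lam$ there, contradicting $u(P)=u^\lam(P)$. In alternative (ii), Hopf's lemma in $A_\de\cap\Om_\lam'$ at $Q$ gives $\partial u/\partial\xi (Q)<\partial u^\lam/\partial\xi(Q)$, which contradicts the identity \eqref{eq contr case ii} coming from the fact that $\xi$ lies in the tangent hyperplane to the level surface $\Gamma_\de$ at $Q$ and $u$ is differentiable there.

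The main obstacle, and indeed the only real one, is to guarantee that the moving-planes machinery operates entirely in a region where $|Du|>0$, so that the Euler inequality \eqref{eqF2II.diseq EL u} collapses to a genuine uniformly elliptic PDE and the classical strong/Hopf comparison tools of \cite{Se1} are legitimate. The hypothesis $\rho>Nf'(0)$ (which by Proposition \ref{prop F2II ball} rules out the trivial ball solution at the relevant scale) combined with the assumed $C^1$-regularity of $u$ near $\Gamma_\de$ is exactly what delivers this, so the rest of the argument is a verbatim transcription of the proof of Theorem \ref{thm symmetry diff functionals}.
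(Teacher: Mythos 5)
Your proposal is correct and follows essentially the same route as the paper: the weak comparison principle of Lemma \ref{lemma weak comp princ notdiff}, the gradient lower bound of Lemma \ref{lemma F2II gradient boundary} (using $\rho>Nf'(0)$ and the $C^1$-regularity near $\Gamma_\de$) to localize away from the degeneracy so that \eqref{eqF2II.diseq EL u} reduces to the uniformly elliptic Euler equation on $A_\de$, and then the strong comparison and Hopf principles fed into the moving planes argument of Subsection \ref{subsection proof of thm}. No gaps.
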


\begin{proof}
The proof follows the lines of the proof of Theorem \ref{thm symmetry diff functionals}.

In this case, the weak comparison principle (which has to be applied to $u$ and $u^\lam$ in $\Omega_\lam'$) is given by Lemma \ref{lemma weak comp princ notdiff}.

Since $u$ is of class $C^1$ in an open neighborhood of $\Gamma_\de$, Lemma \ref{lemma F2II gradient boundary} implies that $|Du|$ is bounded away from zero in an open set $A_\de \supset \Gamma_\de$. Proposition \ref{prop F2II diseq EL u}
then implies that $u$ is a weak solution of
\begin{equation*}
-\diver\left\{f'(|Du|) \frac{Du}{|Du|}\right\} = 1
\end{equation*}
in $A_\de.$ Thus, a strong comparison principle and a Hopf comparison principle analogous to Propositions \ref{lemma F2I strong comparison} and \ref{lemma F2I boundary point} apply.

Once these three principles are established, the proof can be completed by using the method of moving planes, as done in Subsection \ref{subsection proof of thm}.
\end{proof}

\section{Symmetry results for fully nonlinear elliptic and parabolic equations}
\label{section4}

As already mentioned in the Introduction, the argument used in the proof of Theorem \ref{thm symmetry diff functionals} applies to more general elliptic equations of the form \eqref{elliptic}.
Following \cite{Se2} (see properties (A)-(D) on pp. 309-310), we chose to
state our assumptions on $F$ in a very general form and to refer the reader to
the vast literature for the relevant sufficient conditions.

Let $u$ be a viscosity solution of \eqref{elliptic} in $\Omega$ and assume that $u=0$ on $\pa \Omega$. Let $A\subseteq \Omega$ denote an open connected set.
\begin{itemize}
\item[(WCP)] We say that \eqref{elliptic} enjoys the \emph{Weak Comparison Principle} in $A$ if,
for any two viscosity solutions $u$ and $v$ of \eqref{elliptic}, the inequality $u \leq v$ on $\pa A$
extends to the inequality $u \leq v$ on $\overline{A}.$
\item[(SCP)] We say that \eqref{elliptic} enjoys the \emph{Strong Comparison Principle} in $A$ if
for any two viscosity solutions $u$ and $v$ of \eqref{elliptic}, the inequality $u \leq v$ on $\pa A$
implies that either $ u\equiv v$ in $\ovr{A}$ or $u < v$ in $A.$
\item[(BPP)] Suppose $\pa A$ contains a (relatively open) flat portion $H$.
We say that \eqref{elliptic} enjoys the \emph{Boundary Point Property} at $P\in H$ if, for any two solutions $u$ and $v$ of
\eqref{elliptic}, Lipschitz continuous in $A$ and such that $u \leq
v$ in $A,$ then the assumption $u(P)=v(P)$ implies that either $u
\equiv v$ in $\ovr{A}$ or else $u<v$ in $A$ and
\begin{equation*}
\limsup_{\ep\to 0^+}\frac{[v-u](P+\ep\nu)-[v-u](P)}{\ep}>0.
\end{equation*}
Here, $\nu$ denotes the inward unit normal to $\pa A$ at $P$.
\end{itemize}
\par
We shall also suppose that
\begin{itemize}
\item[(IR)]
equation \eqref{elliptic} is \emph{invariant
under reflections in any hyperplane;}
\end{itemize}
in other words, we require the following: for any $\xi$ and $\lam$, $u$ is a solution of \eqref{elliptic} in $\Omega$ if and only if $u^\lam$ is a solution of \eqref{elliptic} in $\RRlxi(\Omega)$ (here, we used the notations introduced in Subsection \ref{subsection proof of thm}).

The following symmetry results hold.
Here, we chose to state our theorems for {\it continuous} viscosity solutions;
however, the same arguments may be applied when the definitions of classical or weak solutions are considered.

\begin{theorem}
\label{th:symmEll}
Let $\Om\subset\RN$ be a bounded domain and let $G$ satisfy \eqref{defD}.
Let $u=u(x)$ be a non-negative viscosity solution of \eqref{elliptic}
satisfying the homogeneous Dirichlet boundary condition
\begin{equation*}
u=0 \ \mbox{ on } \ \pa\Om.
\end{equation*}
Suppose there exist constants $c>0$ and $\de>0$ such that \eqref{OVDTcond} holds.

Let $F$ satisfy (IR) and
\begin{itemize}
\item[(i)] (WCP) for $A=\Omega$;
\item[(ii)] (SCP) and (BBP) for some neighborhood $A_\de$ of $\Gamma_\de$.
\end{itemize}
Then $\Om$ must be a ball.
\end{theorem}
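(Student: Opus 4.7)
The plan is to carry out the method of moving planes in the abstract viscosity-solution setting, closely paralleling the proof of Theorem \ref{thm symmetry diff functionals} in Subsection \ref{subsection proof of thm}, with the hypotheses (WCP), (SCP) and (BPP) substituting for Propositions \ref{thm weak comp minimiz}, \ref{lemma F2I strong comparison} and \ref{lemma F2I boundary point}, and with (IR) replacing the variational invariance used implicitly there. Concretely, for each unit direction $\xi$ and $\lam \in \RR$ I would introduce the hyperplane $\pi_\lam$, the reflection $\mathcal{R}_\lam$, the reflected solution $u^\lam = u\circ\mathcal{R}_\lam$, the caps $G_\lam, \Om_\lam$, and the values $\bar\lam$ and $\lam^*$ exactly as before. (IR) guarantees that $u^\lam$ is a viscosity solution of \eqref{elliptic} on $\mathcal{R}_\lam(\Om_\lam)$, and Lemma \ref{le:exterior sum}(ii) gives $\mathcal{R}_{\lam^*}(\Om_{\lam^*}) \subset \Om$.

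The first step is to establish $u \geq u^{\lam^*}$ on the connected component $\Om'_{\lam^*}$ of $\mathcal{R}_{\lam^*}(\Om_{\lam^*})$ whose closure contains the dichotomy point. On $\pa\Om'_{\lam^*}$ this inequality is immediate: on the flat piece in $\pi_{\lam^*}$ one has $u = u^{\lam^*}$, while on the reflected image of $\pa\Om\cap\pa\Om_{\lam^*}$ one has $u^{\lam^*}=0$ and $u \geq 0$ by nonnegativity. Invoking (WCP) then propagates the inequality to the interior.

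Assuming next that $\Om$ is not $\xi$-symmetric, Serrin's dichotomy produces either (i) an internal tangency point $P \in \pa G \setminus \pi_{\lam^*}$, or (ii) an orthogonality point $Q \in \pi_{\lam^*}\cap \pa G$; both lie on $\Gamma_\de \subset A_\de$. In case (i), $\mathcal{R}_{\lam^*}(P) \in \Gamma_\de$ as well, so $u(P)=c=u^{\lam^*}(P)$; applying (SCP) on $A_\de \cap \Om'_{\lam^*}$ therefore forces $u \equiv u^{\lam^*}$ there, and a short reflection/unique-continuation argument across $\pi_{\lam^*}$ then propagates the identity to give $\xi$-symmetry of the whole configuration, contradicting the assumption. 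In case (ii), $\xi$ is tangent to $\Gamma_\de$ at $Q$; since $\Gamma_\de$ is a common level set of $u$ and $u^{\lam^*}$, the directional derivatives $\pa u/\pa\xi(Q)$ and $\pa u^{\lam^*}/\pa\xi(Q)$ both vanish, while (BPP) applied on the flat portion $\pi_{\lam^*}\cap A_\de$ of $\pa(A_\de \cap \Om'_{\lam^*})$ (which contains $Q$) yields a strict inequality of precisely these derivatives, the desired contradiction.

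The main obstacle is technical rather than structural: one must produce enough regularity of the viscosity solution $u$ near $\Gamma_\de$ to license (BPP) and to give pointwise meaning to the $\xi$-derivatives used in case (ii). (BPP) is stated for Lipschitz solutions, and the tangential-derivative argument requires at least directional differentiability at $Q$. In the intended applications this is provided by interior regularity theory for \eqref{elliptic} on the tubular neighborhood $A_\de$, leveraging that $u\equiv c$ on $\Gamma_\de$; once it is in hand, the proof reduces to the transcription sketched above.
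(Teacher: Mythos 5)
Your overall strategy is exactly the one the paper intends: Theorem \ref{th:symmEll} is proved by transcribing the moving-planes argument of Subsection \ref{subsection proof of thm}, with (WCP), (SCP), (BPP) and (IR) standing in for Propositions \ref{thm weak comp minimiz}, \ref{lemma F2I strong comparison}, \ref{lemma F2I boundary point} and the reflection invariance of $\II_\Om$. The setup, the use of Lemma \ref{le:exterior sum}, the boundary inequality $u^{\lam^*}\le u$ on $\pa\Om'_{\lam^*}$ (using $u\ge 0$ and $u=0$ on $\pa\Om$), and your treatment of case (ii) all match the paper. On case (ii), your ``main obstacle'' is less serious than you suggest: (BPP) is deliberately stated with a $\limsup$ of difference quotients for Lipschitz solutions, and the Remark following the proof of Theorem \ref{thm symmetry diff functionals} shows that the vanishing of the tangential difference quotients of $u$ and $u^{\lam^*}$ at $Q$ requires only Lipschitz continuity, not differentiability; so no extra regularity theory for \eqref{elliptic} is needed there.

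The genuine gap is in your case (i). Having observed that $u(P)=u^{\lam^*}(P)=c$ at the \emph{interior} point $P$ of $A_\de\cap\Om'_{\lam^*}$, you let (SCP) force $u\equiv u^{\lam^*}$ there and then appeal to ``a short reflection/unique-continuation argument'' to propagate this to symmetry of $\Om$. No unique continuation property is among the hypotheses of Theorem \ref{th:symmEll}, none holds for general (possibly degenerate) $F$, and even granted such a property it is not clear how an identity of $u$ and $u^{\lam^*}$ on a tubular neighborhood of part of $\Ga_\de$ would yield the geometric statement that $\Om$ is $\xi$-symmetric. The paper's argument runs the dichotomy of (SCP) the other way: one first shows $u\not\equiv u^{\lam^*}$ in $A_\de\cap\Om'_{\lam^*}$ --- because the internal tangency in case (i) is proper, there are points $x\in\Ga_\de\cap\Om'_{\lam^*}$ near $P$ whose reflections $\RRlxi x$ do not lie on $\Ga_\de$, so that $u(x)=c$ while $u^{\lam^*}(x)=u(\RRlxi x)\ne c$ --- and then (SCP) gives the strict inequality $u>u^{\lam^*}$ throughout $A_\de\cap\Om'_{\lam^*}$, which is contradicted by the equality $u(P)=u^{\lam^*}(P)$ at the interior point $P\in\Ga_\de\cap\RRlxi(\Ga_\de)$. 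You should replace the unique-continuation step by this argument; as written, case (i) does not close.
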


The corresponding result for parabolic equations reads as follows.

\begin{theorem}
\label{th:symmPar}
Let $F,\Omega$ and $G$ satisfy the assumptions of Theorem \ref{th:symmEll}.
\par
Let $u=u(x,t)$ be a non-negative viscosity solution of
\begin{eqnarray}
&& u_t-F(u, Du,D^2 u)=0 \ \mbox{ in } \ \Om\times(0,T),\label{parabolic}\\
&& u=0\ \mbox{ on } \ \Om\times\{0\},\label{initial}\\
&& u=1 \ \mbox{ on } \ \pa\Om\times(0,T). \label{parabDirichlet}
\end{eqnarray}
\par
If there exist a time $t^*\in(0,T)$ and constants $c>0$ and $\de>0$ such that
\begin{equation}
\label{OVDTparabcond}
u=c \ \mbox{ on } \ \Ga_\de\times\{t^*\},
\end{equation}
then $\Om$ must be a ball.
\end{theorem}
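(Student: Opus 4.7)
The plan is to adapt the spatial moving-planes argument from the proofs of Theorems \ref{thm symmetry diff functionals} and \ref{th:symmEll}, replacing each elliptic comparison principle by its parabolic counterpart. Since the geometric setup (the domain $G$, the parallel surface $\Gamma_\de$, and the choice of hyperplanes $\pi_\lam$) is purely spatial, I fix a direction $\xi$ with $|\xi|=1$ and the associated critical value $\lam^*$ defined from $G$ exactly as in Subsection \ref{subsection proof of thm}. Assume for contradiction that $\Omega$ (hence $G$) is not $\xi$-symmetric. By Lemma \ref{le:exterior sum}, $\mathcal{R}_\lam(\Omega_\lam)\subset\Omega$ for every $\lam\in[\lam^*,\bar\lam)$, and by hypothesis (IR), the reflected function $u^\lam(x,t)=u(\mathcal{R}_\lam x,t)$ solves \eqref{parabolic} in $\mathcal{R}_\lam(\Omega_\lam)\times(0,T)$.

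Next I would compare $u$ and $u^\lam$ on the parabolic cylinder $\mathcal{R}_\lam(\Omega_\lam)\times(0,T)$. On its parabolic boundary one has: $u=u^\lam=0$ at $t=0$ by \eqref{initial}; $u=u^\lam$ on $\pi_\lam\cap\Omega$; and on the reflected lateral piece $\mathcal{R}_\lam(\pa\Omega\cap\pa\Omega_\lam)\times(0,T)$, which sits inside $\overline{\Omega}$, $u^\lam\equiv1$ by \eqref{parabDirichlet}, while $u\leq1$ by the maximum principle. Hence $u\leq u^\lam$ on the whole parabolic boundary, and the parabolic analogue of (WCP) yields $u\leq u^\lam$ throughout $\mathcal{R}_\lam(\Omega_\lam)\times[0,T)$ for every $\lam\in[\lam^*,\bar\lam)$.

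At the critical position $\lam=\lam^*$, Serrin's spatial dichotomy applied to $G$ gives either (i) $\mathcal{R}_{\lam^*}(\pa G)$ is internally tangent to $\pa G$ at some $P\in\Gamma_\de\cap\mathcal{R}_{\lam^*}(\Gamma_\de)$ with $P\notin\pi_{\lam^*}$, or (ii) $\pi_{\lam^*}$ is orthogonal to $\pa G$ at some $Q\in\pi_{\lam^*}\cap\Gamma_\de$. In case (i), the overdetermined condition \eqref{OVDTparabcond} gives $u(P,t^*)=c=u^{\lam^*}(P,t^*)$ at an interior point of the neighborhood $A_\de$ of $\Gamma_\de$ where the parabolic versions of (SCP) and (BBP) are assumed to hold; since $u\leq u^{\lam^*}$ in $A_\de\cap\mathcal{R}_{\lam^*}(\Omega_{\lam^*})\times(0,t^*]$, (SCP) forces $u\equiv u^{\lam^*}$ on the parabolic past. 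One then reaches a contradiction by approaching a point of $\mathcal{R}_{\lam^*}(\pa\Omega\cap\pa\Omega_{\lam^*})$, where $u^{\lam^*}=1$ on $(0,T)$ whereas $u$ is strictly below $1$ on any short time interval $(0,\tau)$ by the maximum principle and the initial condition $u=0$. In case (ii), the parabolic (BBP) applied at $(Q,t^*)$ with inward normal $\xi$ yields
\begin{equation*}
\dfrac{\pa u}{\pa\xi}(Q,t^*) < \dfrac{\pa u^{\lam^*}}{\pa\xi}(Q,t^*);
\end{equation*}
but $\xi$ is tangent to $\Gamma_\de$ at $Q$, and $\Gamma_\de$ is a level surface (value $c$) of both $u(\cdot,t^*)$ and $u^{\lam^*}(\cdot,t^*)$, so both directional derivatives vanish --- contradiction.

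The hardest part will be the rigorous implementation of the parabolic SCP and BBP, since the hypotheses of Theorem \ref{th:symmEll} as stated concern only the elliptic situation, and one must reinterpret (WCP), (SCP), (BBP) as referring to their natural parabolic analogues for the operator $\pa_t - F$. In case (i) the additional subtlety is that equality of $u$ and $u^{\lam^*}$ is attained only at the terminal time $t^*$ of the parabolic cylinder under consideration: one must therefore propagate the identity into the parabolic past and combine it with continuity of $u$ up to $\pa\Omega\times(0,T)$ and with the short-time smallness of $u$ to contradict the boundary datum $u\equiv1$ on $\pa\Omega\times(0,T)$.
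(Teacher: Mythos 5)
Your overall strategy is the one the paper intends: there is no separate written proof of Theorem \ref{th:symmPar} in the text, which simply asserts that the moving-plane argument of Subsection \ref{subsection proof of thm} carries over once (WCP), (SCP) and (BPP) are read as their natural parabolic analogues for $\pa_t-F$. Your setup of the reflected caps, the comparison $u\le u^\lam$ on the parabolic cylinders (correctly reversed with respect to the elliptic case because the lateral datum is $1$ rather than $0$; note that this step also uses $0\le u\le 1$, i.e.\ that the constants $0$ and $1$ are respectively sub- and supersolutions, an assumption the paper leaves implicit), and your treatment of case (ii) via (BPP) together with the vanishing of the tangential derivative along the level surface $\Ga_\de\times\{t^*\}$ all match the template, and you are right that the statement of the hypotheses needs to be reinterpreted parabolically.

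The concrete gap is in case (i). Hypothesis (ii) of Theorem \ref{th:symmEll} grants (SCP) only in a neighborhood $A_\de$ of $\Ga_\de$, so from $u\le u^{\lam^*}$ and $u(P,t^*)=u^{\lam^*}(P,t^*)$ you may conclude $u\equiv u^{\lam^*}$ only in (the relevant parabolic component of) $\bigl(A_\de\cap\Om_{\lam^*}'\bigr)\times(0,t^*]$. Your contradiction, however, is reached at the reflected lateral boundary $\mathcal{R}_{\lam^*}(\pa\Om\cap\pa\Om_{\lam^*})\times(0,\tau)$, which in general lies far from $\Ga_\de$ and outside $A_\de$; propagating the identity there would require (SCP) on the whole reflected cap, which is not assumed, and it would also reintroduce exactly the boundary analysis near $\pa\Om$ that the hypotheses are designed to avoid (recall the remark in the Introduction that no regularity of $\pa\Om$ is required because the overdetermination takes place inside $\Om$). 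The intended contradiction stays inside $A_\de$, as in the elliptic proof: the (SCP) dichotomy gives either $u<u^{\lam^*}$ in $A_\de\cap\Om_{\lam^*}'$ at time $t^*$, which already contradicts $u(P,t^*)=u^{\lam^*}(P,t^*)$ because the tangency point $P$ is an \emph{interior} point of that set, or $u\equiv u^{\lam^*}$ there, which is excluded because $u(\cdot,t^*)\equiv c$ on $\Ga_\de\cap\Om_{\lam^*}'$ while $u^{\lam^*}(\cdot,t^*)\not\equiv c$ on that set (the reflection sends these points outside $\ovr{G}$, strictly so somewhere since $G$ is not $\xi$-symmetric, and there $u(\cdot,t^*)\neq c$ by the nondegeneracy of $u(\cdot,t^*)$ across $\Ga_\de$, the parabolic counterpart of Lemma \ref{lemma F2I gradient boundary}). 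You should replace the ending of your case (i) with this local argument; with that change the proof is the paper's.
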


In the literature, there is a large number of results ensuring that (WCP),(SCP) and (BBP) hold
provided sufficient structure conditions are assumed on $F.$ In the following, we collect just few of them.

For a \emph{proper} equation (see \cite{CIL} for a definition) of the form \eqref{elliptic},
a weak comparison principle is given in \cite{KK2} (see also \cite{KK1} and \cite{BM}), where $F$ is assumed to be \emph{locally strictly elliptic} and to be \emph{locally Lipschitz continuous} in the second variable (the one corresponding to $Du$). Under the additional assumption that $F$ is \emph{uniformly elliptic}, (SCP) and (BBP) are proved in \cite{Tr}.
The assumptions in \cite{KK2} include some kinds of mean curvature type equations and nonhomogeneous $p$-Laplace equations; however they do not include the homogeneous $p$-Laplace equation and other degenerate elliptic equations.
\par
For nonlinear elliptic operators not depending on $u$, we can also consider some degenerate cases. Thus, for equations of the form
\begin{equation*}
F(Du,D^2u)=0,\quad  \textmd{in } \Omega,
\end{equation*}
a (WCP) can be found in \cite{BB} and a (SCP) and (BBP) is proved in \cite{GO}. Such criteria
include the $p$-Laplace equation and the minimal surface equation.

We conclude this section by mentioning that, for classical or distributional solutions, the reader can refer to the monographs \cite{PW},\cite{GT},\cite{Fr} and \cite{PS}. More recent and interesting developments on comparison principles for classical and viscosity solutions can be found in \cite{CLN1,CLN2,CLN3,DS,SS,Si}.

\end{document}